\documentclass[11pt,letterpaper]{amsart}

\usepackage[
colorlinks=true,              
linkcolor=blue,                
citecolor=red,               
urlcolor=cyan                  
]{hyperref}                        
\usepackage{amssymb}
\numberwithin{equation}{section}
\theoremstyle{plain}
\newtheorem{theorem}{Theorem}[section]
\newtheorem{prop}[theorem]{Proposition}

\newtheorem{lemma}[theorem]{Lemma}

\newtheorem{corollary}[theorem]{Corollary}

\theoremstyle{definition}

\newcommand{\Rmnum}[1]{\expandafter\@slowromancap\romannumeral #1@}

\newcommand{\Ric}{\operatorname{Ric}}
\newcommand{\ud}{\mathrm{d}}

\newcommand{\ms}{\mathbb{S}}

\allowdisplaybreaks
\newcommand{\R}{\mathbb{R}}
\newcommand{\Vol}{\operatorname{Vol}}
\newcommand{\dv}{\ud\mu_g}

\keywords{Yamabe problem,  $Q$-curvature, Paneitz operator}
\subjclass{53A30, 58J05}
\address{Mingxiang Li, Department  of Mathematics \& Institue of Mathematical Sciences, The Chinese University of Hong Kong}
\email{mingxiangli@cuhk.edu.hk}

\begin{document}
	\title[Positive Yamabe invariant and Paneitz operator]{On the  positivity of  Yamabe invariant and Paneitz operator}
	\author{Mingxiang Li}
	\date{}
	\maketitle
	
	\begin{abstract}Let $(M^n,g)$ be a smooth compact Riemannian manifold of dimension $n\ge 5$. We show that the existence of a conformal metric with positive $Q$-curvature $Q_g$ and positive scalar curvature $R_g$  is equivalent to the positivity of both the Yamabe invariant $Y(M^n,[g])$ and the Paneitz operator $P_g$. For $n=5$, this equivalence confirms a conjecture of Gursky-Hang-Lin (2016, IMRN). Furthermore, assuming $Y(M^n,[g])>0$, $Q_g\ge 0$, and $Q_g\not\equiv 0$, we prove that both $R_g$ and $P_g$ are positive which resolves  a problem  of Hang-Yang (2016, CPAM).  As a corollary, we show that the hypotheses of Gursky-Malchiodi (2015, JEMS) are equivalent to those of Hang-Yang (2016, CPAM).
	\end{abstract}
	
	\section{Introduction}
	
We begin by briefly recalling the development of the Yamabe problem, which has been a central topic in conformal geometry for several  decades. 
Let $(M^n,g)$ be a smooth compact  Riemannian manifold of  dimension $n\geq 3$.   
The Yamabe problem asks whether there exists a conformal metric with constant scalar curvature. This problem has been extensively studied and completely resolved through the works of Yamabe, Trudinger, Aubin, and Schoen (see \cite{Aubin, LP}).

The scalar curvature of the conformal metric is governed by a second-order operator, known as the conformal Laplacian, which is defined as
\begin{equation}\label{eq:conformal-Laplacian}
	L_gu:=-\frac{4(n-1)}{n-2}\Delta_gu+R_gu.
\end{equation}
where $\Delta_g$ is the Laplace-Beltrami operator and $R_g$  denotes the scalar curvature.
Consider a conformal metric $\tilde g=u^{\frac{4}{n-2}}g$ whose  scalar curvature is given by   $$R_{\tilde g}=u^{-\frac{n+2}{n-2}}L_gu.$$ 
By classical variational method,  to solve Yamabe problem, it is sufficient to  prove the existence of a positive minimizer for the Yamabe invariant
$$Y(M,[g]):=\inf_{u\in H^1(M)\setminus \{0\}}\frac{\int_MuL_gu\dv}{\left(\int_M|u|^{\frac{2n}{n-2}}\dv\right)^{\frac{n-2}{n}}.}$$
For convenience, we sometimes ignore the superscript $n$ of $M^n$ whenever no confusion arises.
For the standard sphere $(\ms^n,g_0)$, its Yamabe invariant is given by $n(n-1)|\ms^n|^{\frac{2}{n}}$ 
where $|\ms^n|$ denotes the volume of standard sphere.
The Aubin's criterion (see \cite{Aubin}) states that  if $Y(M,[g])<n(n-1)|\ms^n|^{\frac{2}{n}}$, then a positive minimizer exists.
 $Y(M,[g])>0$ is equivalent to the existence of a conformal metric with positive scalar curvature. Moreover, the positivity of  $Y(M,[g])$ is also equivalent to the positivity of the Green's  function $G_L$ of the conformal Laplacian $L_g$.

We now  turn our attention to $Q$-curvature. For brevity, we introduce  the  notation $
J_g:=\frac{R_g}{2(n-1)}$, denote Ricci tensor by $\Ric_g$ and 
denote the Schouten tensor by 
$
A_g:=\frac{1}{n-2}(\Ric_g-J_gg).
$
As a fourth order generalization of the conformal Laplacian \eqref{eq:conformal-Laplacian}, the Paneitz operator is defined as
\begin{equation}\label{Paneitz-operator}
	P_g\varphi
	=\Delta_g^2\varphi
	+\operatorname{div}_g\bigl(4A_g(\nabla\varphi,e_i)e_i
	-(n-2)J_g\nabla\varphi\bigr)
	+\frac{n-4}{2}Q_g\varphi
\end{equation}
where $\{e_i\}$ is a local orthonormal frame with respect to $g$ and $Q_g$ is Branson's $Q$-curvature  defined by 
\begin{equation}\label{Q-formula}
	Q_g=-\Delta_g J_g-2|A_g|_g^2+\frac{n}{2}J_g^2.
\end{equation}
Both \eqref{Paneitz-operator} and \eqref{Q-formula} hold for each dimension  $n\geq 3$. While the case $n=4$ is relatively well understood (see \cite{CY, DM, Gursky-CMP} for more discussion), the case $n=3$ is markedly different (see \cite{Hang-Yang note,HY-n=3}). Throughout this paper, we focus on the dimension  $n\geq 5$.

 When  $n\geq 5$, for a conformal metric $\tilde g=u^{\frac{4}{n-4}}g$, the Paneitz operator satisfies the conformal covariance property
\begin{equation*}
	P_{\widetilde g}\varphi
	=u^{-\frac{n+4}{n-4}}P_g(u\varphi).
\end{equation*}
In particular, the $Q$-curvature of such conformal metric $\tilde g$ satisfies
\begin{equation}\label{Q-def}
	Q_{\widetilde g}
	=\frac{2}{n-4}u^{-\frac{n+4}{n-4}}P_gu.
\end{equation}
Similar to the Yamabe invariant, it is natural to define a conformal invariant associated with the Paneitz operator as follows 
\begin{equation}\label{eq:Y_4-def}
	Y_4(M,[g]):=\inf_{u\in H^2(M)\setminus \{0\}}\frac{\int_MuP_gu\dv}{\left(\int_M|u|^{\frac{2n}{n-4}}\dv\right)^{\frac{n-4}{n}}}.
\end{equation}
Thanks to Sard's theorem, we may assume, without loss of generality, that 
$u$ is nonnegative by taking 
$|u|$ in the Yamabe invariant. However, the situation is very different when considering $Y_4(M,[g])$. Thus, Gursky, Hang, and Lin \cite{GHL} introduced two other conformal invariants.  The first is given by
$$Y_4^+(M,[g]):=\inf_{\substack{u\in C^\infty(M)\\ u>0}}\frac{\int_MuP_gu\dv}{\left(\int_Mu^{\frac{2n}{n-4}}\dv\right)^{\frac{n-4}{n}}}.$$
  When the Yamabe invariant $Y(M,[g])>0$, they further define
\begin{equation}\label{eq:Y4-star}
	Y_4^*(M,[g])
	=\frac{n-4}{2}
	\inf_{\substack{\widetilde g\in[g]\\ R_{\widetilde g}>0}}
	\frac{\displaystyle\int_M Q_{\widetilde g}\,d\mu_{\widetilde g}}
	{\Vol_{\widetilde g}(M)^{\frac{n-4}{n}}}.
\end{equation}
It follows immediately from the definitions that
\begin{equation}\label{Y_4-relation}
	Y_4(M,[g])\leq Y_4^+(M,[g])\leq Y_4^*(M,[g]).
\end{equation}

By the classical  variational method, one can find a conformal metric with constant $Q$-curvature provided the existence of a positive minimizer of the $Y_4(M,[g])$.
However, different from the conformal  Laplacian operator, the Paneitz operator generally lacks a strong maximum principle or the positivity of its Green's function. Assuming such properties hold a priori, there are many classical variational results concerning the Paneitz operator including  \cite{ER, Hebey-Robert, HR} and the references therein. For further discussion, we refer to the nice lecture notes of Robert \cite{Robert}.  Similar to Aubin's criterion, when $0<Y_4(M,[g])<Y_4(\ms^n,[g_0])$ and the strong maximum principle holds, one can find a conformal metric with constant $Q$-curvature (see  Proposition 4.1 and Theorem 5.2 in \cite{Robert}). In the case of locally conformally flat manifolds with  Poincar\'e exponent less than $(n-4)/2$, Qing and Raske \cite{QR} showed that the strong maximum principle holds and also established the existence of a conformal metric with constant $Q$-curvature.

Finding a natural geometric condition that  ensures the strong  maximum principle or the positivity of the Green's function for the  Paneitz operator was  a longstanding and difficult problem. The first breakthrough was  due to  Gursky and Malchiodi \cite{GM}, who showed that,  under the condition 
\begin{equation}\label{condition-of-GM}
	Q_g\geq 0\;\;  \mathrm{and} \;\; Q_g\not\equiv 0,\quad R_g\geq 0, 
\end{equation}
 the strong maximum principle for $P_g$ holds and then the Green's function $G_P$ for $P_g$ is positive.  Roughly speaking, they ingeniously applied the second-order strong  maximum principle twice,  based on the structure of $Q$-curvature and the Paneitz operator.
 Subsequently, Hang and Yang \cite{ Hang-Yang 15 IMRN} established the  positivity of the Green's function $G_{P}$  under the relatively relaxed condition
 \begin{equation}\label{condition-of-HY}
 	Q_g\geq 0\;\;  \mathrm{and} \;\; Q_g\not\equiv  0,\quad Y(M,[g])>0.
 \end{equation}
They established  a remarkable identity \eqref{eq:HY-identity} involving  $P_g(G_L^{\frac{n-4}{n-2}})$. For  the  standard sphere, $G_P$ is equal to $G_L^{\frac{n-4}{n-2}}$ up to a positive constant factor. The key idea  in \cite{Hang-Yang 15 IMRN} is to approximate  $G_P$ by $G_L^{\frac{n-4}{n-2}}$ and then obtain the identity \eqref{eq:HY-identity}.
Moreover, under the assumption \eqref{condition-of-HY},  Hang and  Yang \cite{Hang-Yang 15 IMRN} further proved that  $G_P\geq c(n)G_L^{\frac{n-4}{n-2}}$ for some positive constant $c(n)$ depending only on $n$.
 
Another key ingredient about $Q$-curvature  concerns  the positivity of the  Paneitz operator. We say that  $P_g$ is positive (denoted  simply by $P_g>0$) if, for all $u\in H^2(M)$, one has
\begin{equation}\label{positive P_g}
	\int_MuP_gu\dv\geq \lambda_1(g)\int_Mu^2\dv
\end{equation}
for some positive constant $\lambda_1(g)$. Using  the Sobolev embedding theorem and H\"older's inequality (see \cite{DHL}), $P_g>0$ is equivalent to $Y_4(M,[g])>0$. Under the assumption \eqref{condition-of-GM} with  $n\geq 5$, Gursky and Malchiodi \cite{GM}  (see Xu-Yang \cite{Xu-Yang} for $n\geq 6$) established the positivity of $P_g$ by employing   a  trick developed in  Gursky's remarkable  work \cite{Gursky-CMP} for the  four-dimensional case. Under the relaxed  assumption \eqref{condition-of-HY}, Hang and Yang \cite{Hang-Yang CPAM ngeq5}  raised the question whether    $P_g>0$ still holds.   This question is explicitly stated in \cite[Problem 3.14]{Hang-Yang note}. In this paper, we answer this question affirmatively.

Under the assumption \eqref{condition-of-GM}, Gursky and Malchiodi \cite{GM} first established the strong maximum principle and the positivity  $P_g>0$,  and then employed a nonlocal flow method to prove the existence of a conformal metric with constant  $Q$-curvature. Under condition \eqref{condition-of-HY}, Hang and Yang adopted a different strategy. They considered the extremal problem involving a new functional associated with the Paneitz operator:
$$\Theta_4(M,[g]):=\frac{2}{n-4}\sup_{\tilde g\in[g]}\frac{\int_MQ_{\tilde g}\ud\mu_{\tilde g}}{\left(\int_M|Q_{\tilde g}|^{\frac{2n}{n+4}}\ud\mu_{\tilde g}\right)^{\frac{n+4}{2n}}}.$$ Combining this with the positivity of the Green's function  established in \cite{Hang-Yang 15 IMRN}, they showed the existence of a conformal metric with constant 
 $Q$-curvature.  

It is well known that  the Yamabe invariant $Y(M,[g])>0$ is equivalent to the existence of conformal metric with positive scalar curvature. When $Q$-curvature is taken into account, it is natural to ask whether there exist some positive conformal invariant quantities  characterize the existence of a conformal metric with both positive $Q$-curvature and scalar curvature (see \cite [Problem 1.1]{GHL}). For $n\geq 6$, Gursky, Hang and Lin \cite{GHL} employed the continuity method to address this problem under the assumption
\begin{equation}\label{condition-of-GHL}
	Y_4^*(M,[g])>0,\quad Y(M,[g])>0.
\end{equation}
Moreover, they showed that all equalities in \eqref{Y_4-relation} hold in this situation. Due  to certain technical limitations, they were unable to deal with the case   $n=5$.  This has remained a long standing open problem ever since. In this paper, we employ the method developed by Hang and Yang in \cite{Hang-Yang note, Hang-Yang 15 IMRN} to fill this last gap. Our first result is the following.
\begin{theorem}\label{thm:main theorem}
	Let $(M^n,g)$ be a smooth  compact  Riemannian manifold of dimension $n\geq 5$.  Suppose that 
$Y(M,[g])>0$ and $	Y_4^*(M,[g])>0$. Then
there exists a conformal metric
	$\tilde g\in [g]$ satisfying
	$R_{\tilde g}>0$ and $Q_{\tilde g}>0.$
\end{theorem}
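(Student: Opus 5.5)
We will use the Hang--Yang approach built on the Green's function of the conformal Laplacian. Since $Y(M,[g])>0$, we may assume after a conformal change that $R_g>0$. By the definition \eqref{eq:Y4-star}, $Y_4^*>0$ forces $\int_M Q_{\tilde g}\,d\mu_{\tilde g}>0$ for every $\tilde g\in[g]$ with $R_{\tilde g}>0$. The plan has three steps. First we show that $P_g>0$, and then that $G_P>0$. Finally we produce the desired metric as $\tilde g=u^{\frac{4}{n-4}}g$, where $u=\int_M G_P(\cdot,y)f(y)\,d\mu_g(y)$ for a suitable positive $f$. Then $P_gu=f>0$, so \eqref{Q-def} gives $Q_{\tilde g}>0$, and what remains is to arrange $R_{\tilde g}>0$. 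A natural choice is $f$ approximating a delta mass, so that $u$ is close to $G_P(\cdot,p)$. Near $p$, the metric $G_P(\cdot,p)^{\frac{4}{n-4}}g$ is asymptotically flat with positive mass-type expansion, so we can test its scalar curvature through an expansion of $G_P$ against $G_L^{\frac{n-4}{n-2}}$.

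For the positivity step we will use the Hang--Yang identity \eqref{eq:HY-identity}. For $p\in M$, the metric $\hat g=G_L(\cdot,p)^{\frac{4}{n-2}}g$ on $M\setminus\{p\}$ is scalar flat, and the identity expresses $P_g(G_L^{\frac{n-4}{n-2}})$ as
\[
c(n)\,\delta_p+(\text{a term controlled by }|\Ric_{\hat g}|^2_{\hat g}\text{ with a sign}).
\]
Integrating $Q$ against suitable powers of $G_L$ and using $\int Q_{\tilde g}\,d\mu_{\tilde g}>0$ along the family of metrics $\tilde g_\epsilon=(G_L+\epsilon)^{\frac{4}{n-2}}g$, all of which have positive scalar curvature, we aim to show that $P_g$ has no nonpositive eigenvalue. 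The argument goes by contradiction. If $\lambda_1(P_g)\le 0$, take the first eigenfunction $\varphi$ and run the continuity argument of \cite{GHL}. Consider the path $P_t=tP_g+(1-t)P_{\text{model}}$, or equivalently the metrics $g_t$ interpolating between $g$ and a metric whose $Q$ is positive. Along this path, $\ker P_{g_t}$ first becomes nontrivial at some time. At that time the Green's function, which was positive before, degenerates, and we will show that the limiting kernel element has a sign. Pairing it with the positive $Q$-integrals then yields a contradiction. For $n\ge6$, \cite{GHL} gets the sign information from an estimate that fails when $n=5$. We instead intend to use the Hang--Yang lower bound $G_P\ge c(n)G_L^{\frac{n-4}{n-2}}$, applied while $G_P$ still exists, to keep positivity uniform up to the critical time.

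The main obstacle is this uniform positivity of $G_{P_{g_t}}$ along the deformation, especially in dimension $5$, where $G_P$ has the same leading singularity $r^{-1}$ as $G_L^{1/3}$ and the error terms are critical. At that step we will first try to prove, directly from \eqref{eq:HY-identity} and $R_{g_t}>0$, an inequality of the form
\[
\int_M G_{P}(x,y)\,Q\,d\mu\ \ge\ c\int_M G_L^{\frac{n-4}{n-2}}Q\,d\mu,
\]
with constants independent of $t$. This would rule out degeneration of $G_P$. Once $G_P>0$ and $P_g>0$ are in hand, the construction of the first paragraph, combined with an averaging over $p$ so that $R_{\tilde g}>0$ everywhere, finishes the proof.
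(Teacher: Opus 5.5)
\textbf{There is a genuine gap at each of your two substantive steps.}

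\emph{Positivity of $P_g$ and $G_P$.} The continuity scheme is not well posed.
\begin{itemize}
\item If the $g_t$ lie in $[g]$, then $\ker P_{g_t}=u_t^{-1}\ker P_g$. Likewise $\int_M\varphi P_{g_t}\varphi\,\ud\mu_{g_t}=\int_M(u_t\varphi)P_g(u_t\varphi)\,\dv$, so the number of negative eigenvalues is a conformal invariant. Hence nothing can ``first become nontrivial'' along such a path. Moreover, a metric in $[g]$ with positive $Q$ to interpolate to is precisely the conclusion you are after.
\item If instead $P_t=tP_g+(1-t)P_{\mathrm{model}}$ is only an operator interpolation, then neither \eqref{eq:HY-identity} nor the hypothesis $Y_4^*>0$ says anything about $P_t$.
\end{itemize}
The Hang--Yang bound $G_P\ge c(n)G_L^{\frac{n-4}{n-2}}$ is proved under \eqref{condition-of-HY}, so it presupposes a metric with $Q\ge 0$, $Q\not\equiv 0$. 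Invoking it ``while $G_P$ still exists'' is therefore circular. The $t$-uniform inequality you say you will try to prove for $n=5$ is exactly the step that was open. Finally, the metrics $(G_L+\epsilon)^{\frac{4}{n-2}}g$ are singular at $p$ and have infinite volume, so they are not admissible competitors in \eqref{eq:Y4-star}.

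\emph{The final step.} Even granting $P_g>0$ and $G_P>0$, it does not yield $R_{\tilde g}>0$. An expansion of $G_P(\cdot,p)$ at the pole controls the scalar curvature of the $Q$-flat metric $G_P(\cdot,p)^{\frac{4}{n-4}}g$ only near $p$.

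Averaging does preserve a sign. Take $a=\frac{n-2}{n-4}$, any positive kernel $h_y$, $f\ge0$, and $U=\int_M h_y f(y)\,\ud\mu_g(y)$. The computation in Lemma \ref{lem:positive-R_g} gives $U^{-a}L_g(U^a)\ge\int_M h_y^{-a}L_g(h_y^a)\,\ud\nu_x(y)$. But this only helps if each pole metric already has $R\ge 0$, and nothing in your outline shows that for $G_P$.

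\textbf{The missing idea.} Average $H=c_2(n)^{-1}G_L^{\frac{n-4}{n-2}}$ instead of $G_P$.
\begin{itemize}
\item The pole metrics of $H$ are exactly scalar flat. So $T_H(f)$ with $f\ge 0$ always gives $R\ge 0$, and $R>0$ if $f\ge\varepsilon>0$.
\item The price is that $P_gT_H=I-T_\Gamma$, with $T_\Gamma$ positive and compact.
\item Everything then reduces to $r_\sigma(T_\Gamma)<1$, and this is where $Y_4^*>0$ enters. Otherwise the Krein--Rutman eigenfunction $f\ge 0$ gives $\int_M T_H(f)P_gT_H(f)\,\dv=(1-r_\sigma(T_\Gamma))\int_M fT_H(f)\,\dv\le 0$.
\item On the other hand, the smooth functions $T_H(f+\varepsilon)$ define metrics with $R>0$, so they are admissible in \eqref{eq:Y4-star}. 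Letting $\varepsilon\to 0$ gives a contradiction.
\item Once $r_\sigma(T_\Gamma)<1$, $T_H(f)$ already gives $Q\ge 0$, $Q\not\equiv 0$, $R\ge 0$. Lemma \ref{lem:GM} and Theorem D of \cite{GM} then finish the proof.
\end{itemize}
Positivity of $P_g$ or $G_P$ is never needed as an intermediate step. One can check a posteriori that your averaging step holds, because $G_P(\cdot,p)$ is then $T_H$ applied to the nonnegative Neumann-series measure $\delta_p+\sum_{k\ge1}T_\Gamma^{k-1}\Gamma(\cdot,p)\,\dv$. That justification, however, is exactly the machinery above.
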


As noted earlier, Hang and Yang \cite{Hang-Yang note, Hang-Yang CPAM ngeq5} asked whether the condition \eqref{condition-of-HY}  implies  the positivity of $P_g$. Proving this directly appears to be difficult. In our previous  work  \cite{Li-Xu}, joint with Xu, we discovered that for any conformal metric on the standard sphere, the positivity of $Q$-curvature directly implies the positivity of the scalar curvature, where  we  employed  potential theory to represent the scalar curvature as a positive integral. Motivated by this observation and the integral representation, we prove that under the assumption \eqref{condition-of-HY}, the scalar curvature itself is indeed positive. Then, applying Proposition 2.3 of \cite{GM}, we obtain the positivity of the Paneitz operator $P_g$.  As a direct corollary, the conditions \eqref{condition-of-GM} and \eqref{condition-of-HY} are  equivalent.   We state our second result as follows.
\begin{theorem}\label{thm:positive Q imply positive P}
	Let $(M^n,g)$ be a smooth  compact  Riemannian manifold with dimension $n\geq 5$. Suppose that 
	$Y(M,[g])>0$, $	Q_g\geq 0$ and $Q_g\not\equiv 0$. Then, there holds
	$R_g>0$ and  $P_g>0.$
\end{theorem}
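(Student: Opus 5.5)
The plan is to first prove $R_g>0$ pointwise, and then deduce $P_g>0$ from Proposition 2.3 of \cite{GM}. That proposition gives positivity of the Paneitz operator under \eqref{condition-of-GM}, and $R_g>0$ together with $Q_g\ge0$, $Q_g\not\equiv0$ is a special case. So everything hinges on showing that the scalar curvature is positive. Following the introduction, I will represent $J_g$ (equivalently $R_g$) by an integral against a positive kernel built from the Green's function $G_L$ of $L_g$. This Green's function exists and is positive because $Y(M,[g])>0$.

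The key computation comes from the definition \eqref{Q-formula}, which reads
\[
-\Delta_g J_g=Q_g+2|A_g|^2-\tfrac n2 J_g^2 .
\]
Since $R_g=2(n-1)J_g$, we have $L_g J_g=-\frac{4(n-1)}{n-2}\Delta_g J_g+2(n-1)J_g^2$. Substituting the previous identity gives
\[
\frac{n-2}{4(n-1)}L_gJ_g = Q_g+2|A_g|^2-\tfrac n2 J_g^2+\tfrac{n-2}{2}J_g^2=Q_g+2|A_g|^2-J_g^2 .
\]
The term $-J_g^2$ has the wrong sign: $|A_g|^2\ge J_g^2/n$ controls only a fraction of it. So a plain maximum principle fails, and I will instead work with a power $w=J_g^{\alpha}$ on the set where $J_g>0$, or more robustly with a conformally covariant combination.

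The better route, I think, is the Hang--Yang identity \eqref{eq:HY-identity} for $P_g(G_L^{\frac{n-4}{n-2}})$. In the conformal metric $\hat g=G_L^{\frac4{n-2}}g$ on $M\setminus\{p\}$, which is scalar flat and asymptotically flat, that identity expresses $Q$ in terms of $|\mathrm{Ric}|^2$ with a sign. I would derive the analogue for $J_g$: with $u=G_{L,p}$ and a suitable exponent $\beta$, compute $L_g$ or $\Delta_g$ of a product like $J_g\,u^{\beta}$ and integrate against $G_{L,x}$. The goal is a representation
\[
J_g(x)=\int_M K(x,y)\bigl(Q_g(y)+\text{nonnegative quadratic terms}\bigr)\,\dv(y),\qquad K>0.
\]
This mirrors \cite{Li-Xu} on the sphere, where $R$ is written as a positive potential of $Q$ through $G_P=c\,G_L^{(n-4)/(n-2)}$. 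Concretely, I would try the operator factorization $P_g=L_g^{(2)}\circ(\text{second-order})+$ lower order, writing $J_g$ as the solution of a second-order equation whose right-hand side is $Q_g$ plus squares. Then the positivity of $G_L$, or of the Green's function of $-\Delta_g+cJ_g$, together with $Q_g\ge0$, $Q_g\not\equiv0$, gives $J_g>0$ everywhere. Strictness comes from the nonvanishing of $Q_g$ combined with $K>0$.

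The main obstacle is exactly the bad term $-J_g^2$, which must be absorbed into a nonnegative term. My first attempt would be a continuity argument in the conformal class. Suppose $J_g$ vanished or were negative somewhere. Use the positive solution $\phi$ of $L_g\phi=\lambda\phi$ (first eigenfunction, with $\lambda>0$) to pass to $g_t=\phi^{4t/(n-2)}g$, which has positive scalar curvature at $t=1$. Then examine the first point where the scalar curvature touches zero, using the identity above. At such a minimum point we have $J=0$, so $-\Delta J\le0$ while the right-hand side equals $Q+2|A|^2\ge0$. This forces $Q=0$ and $A=0$ at that point, and a strong maximum principle applied to $-\Delta J+J^2\ge Q\ge 0$ on the connected set yields a contradiction unless $J\equiv0$, which is ruled out by $Q\not\equiv0$.

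That last maximum-principle argument is in fact direct and does not need the continuity path. If $\min J_g\le0$, then the inequality $-\Delta_g J_g+J_g\cdot J_g\ge Q_g\ge0$ is a linear inequality $-\Delta_g J_g+cJ_g\ge0$ with $c=J_g$ bounded. The strong maximum principle then gives either $J_g>0$ or $J_g\equiv$ const $\le0$. The constant case forces $Q_g=-2|A_g|^2+\frac n2J^2$. It remains to exclude $J_g\equiv c<0$ and $J_g\equiv0$ under $Y>0$.

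For this I will use $Y(M,[g])>0$, the only place it enters. The quantity $\int_M J_g^{-}$ must be controlled, and I would argue that $\min J_g<0$ contradicts $Y>0$. The inequality handles the minimum only when $\min J_g=0$; when $\min J_g<0$ the maximum principle is unavailable. So the hardest case is excluding negative minima, and this is where I expect the Green's function representation above to be essential.
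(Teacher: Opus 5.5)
Your reduction of $P_g>0$ to $R_g>0$ via Proposition 2.3 of \cite{GM} is correct, and it is how the paper concludes (Lemma \ref{lem:positive-P_g}). The gap is that $R_g>0$, which is the whole content of the theorem, is never proved.

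The identity $-\Delta_gJ_g+\frac n2J_g^2=Q_g+2|A_g|^2$ gives ``$J_g>0$ or $J_g\equiv0$'' only once you already know $J_g\ge0$. At a negative minimum the zeroth-order coefficient $\frac n2J_g$ is negative and nothing follows. So your dichotomy ``$J_g>0$ or $J_g\equiv$ const $\le0$'' is not what the strong maximum principle gives. Your last paragraph concedes this and defers the case $\min J_g<0$ to a positive-kernel representation of $J_g$, but that representation is never constructed.

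The continuity path $g_t=\phi^{4t/(n-2)}g$ does not rescue the argument. The touching argument at the first time $R_{g_t}$ hits zero needs $Q_{g_t}\ge0$ at that time. But $Q$ transforms nonlinearly along this path, so its sign is uncontrolled. Your ``direct'' version then simply assumes $J_g\ge0$, which is the conclusion.

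Two ideas are missing.

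\emph{First}, the deformation must be linear in the fourth-order conformal factor. If $\tilde g=u^{\frac{4}{n-4}}g$ and $u_t=t+(1-t)u$, then linearity of $P_g$ makes $Q_{g_t}$ a nonnegative combination of $Q_g$ and $Q_{\tilde g}$ (see \eqref{eq:Q_g_t}). The touching argument of Lemma \ref{lem:GM} then transfers $R>0$ from $\tilde g$ back to $g$.

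\emph{Second}, you need such an endpoint $\tilde g$ with $Q_{\tilde g}\ge0$, $Q_{\tilde g}\not\equiv0$ \emph{and} $R_{\tilde g}\ge0$. This is where $Y>0$ and the Green's function genuinely enter. The paper obtains $\tilde g$ from Hang--Yang:
\begin{enumerate}
\item A positive constant-$Q$ metric \cite{Hang-Yang CPAM ngeq5} together with Theorem \ref{thm:Hang-Yang theorem} gives $r_\sigma(T_\Gamma)<1$.
\item The Krein--Rutman eigenfunction $f\ge0$ of $T_\Gamma$ gives $u=T_H(f)$ with $P_gu=(1-r_\sigma(T_\Gamma))f\ge0$.
\item Lemma \ref{lem:positive-R_g} shows $L_g(u^{\frac{n-2}{n-4}})\ge0$, by writing it as a multiple of the variance of $\nabla_x\log G_L(x,y)$ with respect to a probability measure in $y$.
\end{enumerate}

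So the positive representation you are looking for is not a representation of $J_g$. It is a representation of a \emph{conformal factor} as $\int_M G_L(\cdot,y)^{\frac{n-4}{n-2}}f(y)\,\ud\mu_g(y)$ with $f\ge0$. This is the non-spherical substitute for $G_P=c\,G_L^{\frac{n-4}{n-2}}$ used in \cite{Li-Xu}. Nonnegativity of $R$ then comes from Cauchy--Schwarz, not from a maximum principle for $J_g$.

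One can even apply this to $g$ itself. With $\ker P_g=\{0\}$ from \cite{Hang-Yang 15 IMRN} and $r_\sigma(T_\Gamma)<1$, set $f=\sum_{k\ge0}T_\Gamma^k(\frac{n-4}{2}Q_g)\ge0$. Then $1=T_H(f)$, and Lemma \ref{lem:positive-R_g} gives $R_g=L_g(1)\ge0$.
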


Combining Theorem \ref{thm:main theorem} and Theorem \ref{thm:positive Q imply positive P} with the  results established in  \cite{GM, Hang-Yang CPAM ngeq5, Hang-Yang 15 IMRN}, we obtain the following corollary.

\begin{corollary}\label{corollary}
	Let $(M,g)$ be a smooth  compact  Riemannian manifold with  dimension $n\geq 5$ and  $Y(M,[g])>0$. Then, there holds 
	$$Y_4(M,[g])>0\iff \exists \tilde g\in[g]\; \mathrm{with}\;  Q_{\tilde g}\equiv 1 \iff \ker P_g =\{0\}, G_P>0.$$
\end{corollary}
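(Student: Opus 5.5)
The plan is to prove the three equivalences cyclically, $(1)\Rightarrow(2)\Rightarrow(3)\Rightarrow(1)$. Two facts do most of the work: Theorem \ref{thm:positive Q imply positive P} turns positive $Q$-curvature into positivity of the Paneitz operator, and Theorem \ref{thm:main theorem} produces positive $Q$ from positive invariants. Throughout, $Y(M,[g])>0$ is a conformal invariant, so it holds for every metric in $[g]$. The condition $Y_4(M,[g])>0$ is likewise conformally invariant and equivalent to $P_{\tilde g}>0$ for any $\tilde g\in[g]$.

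\emph{Step $(1)\Rightarrow(2)$.} Assume $Y_4(M,[g])>0$.
\begin{itemize}
\item By \eqref{Y_4-relation}, $Y_4^*(M,[g])\ge Y_4(M,[g])>0$.
\item Since also $Y(M,[g])>0$, Theorem \ref{thm:main theorem} gives $\hat g\in[g]$ with $R_{\hat g}>0$ and $Q_{\hat g}>0$.
\item Then $\hat g$ satisfies \eqref{condition-of-HY}. The existence result of Hang--Yang \cite{Hang-Yang CPAM ngeq5} (via the extremal problem for $\Theta_4$ together with $G_P>0$ from \cite{Hang-Yang 15 IMRN}) yields a conformal metric with constant $Q$-curvature.
\item That constant must be positive, for instance because $P>0$ and the constant $c$ satisfies $c\int u^{\frac{n+4}{n-4}}=\frac{2}{n-4}\int P_g u\cdot 1\,\dv$ with $u>0$. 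A constant rescaling then normalizes it to $Q_{\tilde g}\equiv1$.
\end{itemize}

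\emph{Step $(2)\Rightarrow(3)$.} Given $\tilde g\in[g]$ with $Q_{\tilde g}\equiv 1$ and $Y(M,[g])>0$, condition \eqref{condition-of-HY} holds for $\tilde g$.
\begin{itemize}
\item Theorem \ref{thm:positive Q imply positive P} gives $P_{\tilde g}>0$, so $\ker P_{\tilde g}=\{0\}$.
\item By conformal covariance, $\varphi\mapsto u\varphi$ identifies $\ker P_{\tilde g}$ with $\ker P_g$, so $\ker P_g=\{0\}$.
\item Hang--Yang \cite{Hang-Yang 15 IMRN} gives $G_{P_{\tilde g}}>0$.
\item The Green's functions transform by $G_{P_{\tilde g}}(x,y)=u(x)^{-1}u(y)^{-1}G_{P_g}(x,y)$ with $u>0$, hence $G_P>0$ for $g$ as well.
\end{itemize}

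\emph{Step $(3)\Rightarrow(1)$.} This is the step needing the most care, though it is still short. Since $\ker P_g=\{0\}$, the Green's function $G_P$ exists. Set
$$u(x)=\int_M G_P(x,y)\,\ud\mu_g(y).$$
The singularity of $G_P$ is of order $d(x,y)^{4-n}$, which is integrable, so $u$ is well defined. Moreover $P_g u=1$, elliptic regularity makes $u$ smooth, and $G_P>0$ off the diagonal gives $u>0$ everywhere.

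For $\tilde g=u^{\frac{4}{n-4}}g$, formula \eqref{Q-def} then gives $Q_{\tilde g}=\frac{2}{n-4}u^{-\frac{n+4}{n-4}}>0$. Since $Y(M,[\tilde g])=Y(M,[g])>0$, Theorem \ref{thm:positive Q imply positive P} applied to $\tilde g$ yields $P_{\tilde g}>0$. Therefore $Y_4(M,[g])=Y_4(M,[\tilde g])>0$.

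The main technical point is verifying the regularity and strict positivity of $u$ near the diagonal; both follow from the standard local expansion of $G_P$. The deep inputs are Theorems \ref{thm:main theorem} and \ref{thm:positive Q imply positive P}, which are assumed here.
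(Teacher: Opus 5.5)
Your proof is correct, but it is organized differently from the paper's.

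\textbf{The paper's route.} It proves four implications, essentially by citation:
\begin{itemize}
\item $Y_4>0\Rightarrow Q_{\tilde g}\equiv 1$, from Theorem \ref{thm:main theorem} plus Theorem D of \cite{GM};
\item the converse, directly from Theorem \ref{thm:positive Q imply positive P};
\item both directions between $Q_{\tilde g}\equiv 1$ and ``$\ker P_g=\{0\}$, $G_P>0$'', from Theorem 1.1 of \cite{Hang-Yang 15 IMRN} together with the existence theorem of \cite{Hang-Yang CPAM ngeq5}.
\end{itemize}

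\textbf{Your route.} You close a cycle instead.
\begin{itemize}
\item Your $(1)\Rightarrow(2)$ is the paper's Case 1 with \cite{Hang-Yang CPAM ngeq5} in place of \cite{GM}. This is legitimate, since $\hat g$ satisfies both \eqref{condition-of-GM} and \eqref{condition-of-HY}.
\item In $(2)\Rightarrow(3)$ you get $\ker P_g=\{0\}$ from Theorem \ref{thm:positive Q imply positive P} rather than from \cite{Hang-Yang 15 IMRN}. You also make the conformal transfer explicit, and your transformation law $G_{P_{\tilde g}}(x,y)=u(x)^{-1}u(y)^{-1}G_{P_g}(x,y)$ is right.
\item The main difference is $(3)\Rightarrow(1)$. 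Taking $u=P_g^{-1}(1)=\int_M G_P(\cdot,y)\,\ud\mu_g(y)$ produces a metric with $Q>0$ by hand. Theorem \ref{thm:positive Q imply positive P} then gives $Y_4>0$.
\end{itemize}

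\textbf{What each buys.} Your $(3)\Rightarrow(1)$ is exactly the elementary half of the Hang--Yang equivalence, made explicit. As a result, at the level of this corollary your route calls on a constant-$Q$ existence theorem only once. The paper invokes one in both Case 1 and Case 4. The paper's version is shorter because it outsources everything to the cited equivalences.

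\textbf{Two small remarks.}
\begin{itemize}
\item No local expansion of $G_P$ is needed in $(3)\Rightarrow(1)$. Smoothness of $u$ is just elliptic regularity for $P_gu=1$ with $P_g$ invertible. Strict positivity follows from $G_P(x,\cdot)>0$ away from $x$.
\item In your sign argument for the constant $c$, pairing $P_gu=\frac{n-4}{2}c\,u^{\frac{n+4}{n-4}}$ with $1$ gives $\frac{n-4}{2}\int_M Q_g u\,\dv$ on the right. Its sign comes from taking the background metric to be $\hat g$ with $Q_{\hat g}>0$, not from $P>0$. If you want to use $P>0$, pair with $u$ instead, which gives $\frac{n-4}{2}c\int_M u^{\frac{2n}{n-4}}\dv=\int_M uP_gu\,\dv>0$. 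In any case the cited existence theorems already produce a positive constant.
\end{itemize}
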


We now outline the structure of this paper. In Section \ref{sec:G-M revisit}, we recall the strong maximum principle and positivity of $P_g$ established by Gursky and Malchiodi. In Section \ref{sec:H-Y revisit}, we recall the remarkable Hang–Yang's  identity and establish a crucial lemma concerning the positivity of the scalar curvature (see Lemma \ref{lem:positive-R_g}). Next, in Section \ref{sec:spectral}, we recall some results on the spectrum of a compact operator established  in \cite{Hang-Yang note} and establish another crucial lemma regarding connections between the spectral radius and the positivity of the $Q$-curvature and scalar curvature (see Lemma \ref{lemm:crucial}). Finally, with these preparations in hand, we complete the proofs of Theorems \ref{thm:main theorem},  \ref{thm:positive Q imply positive P}  and Corollary \ref{corollary} in Section \ref{sec:proof}.

\section{Gursky-Malchiodi's   results revisited}\label{sec:G-M revisit}

As mentioned in the introduction, Gursky and Malchiodi \cite{GM} established the strong maximum principle for $P_g$ by applying the standard maximum principle twice.   The following lemma, which is essentially Theorem 2.2 in \cite{GM}, is restated here for subsequent use. Since their proof is short and elegant, we repeat their proof here.
\begin{lemma}\label{lem:GM}
	Let $(M^n,g)$ be a smooth  compact  Riemannian manifold of dimension $n\geq 5$. Suppose that  there exists a smooth  conformal metric $\tilde g=u^{\frac{4}{n-4}}g$ such that  $Q$-curvature $Q_{\tilde g}\geq 0$, $Q_{\tilde g}\not\equiv 0$, and the scalar curvature $R_{\tilde g}\geq 0$. If $Q_g\geq 0$, then the scalar curvature $R_g>0.$
\end{lemma}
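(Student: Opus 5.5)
The plan is a continuity argument along the segment of conformal factors joining $g$ to $\tilde g$. At each time the second-order inequality hidden in the formula \eqref{Q-formula} for $Q$, combined with the usual strong maximum principle, will upgrade $R\geq 0$ to $R>0$.

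\textbf{Setup.} Set $u_t=(1-t)+tu$ and $g_t=u_t^{\frac{4}{n-4}}g$ for $t\in[0,1]$. Then $u_t>0$ and $g_t$ is smooth, with $g_0=g$ and $g_1=\tilde g$. By \eqref{Q-def} and linearity of $P_g$,
$$\frac{n-4}{2}\,u_t^{\frac{n+4}{n-4}}Q_{g_t}=P_gu_t=(1-t)P_g1+tP_gu=\frac{n-4}{2}\Big((1-t)Q_g+tQ_{\tilde g}u^{\frac{n+4}{n-4}}\Big).$$
Hence $Q_{g_t}\geq 0$ for every $t\in[0,1]$, and $Q_{g_t}\not\equiv0$ for $t\in(0,1]$.

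\textbf{Key pointwise step.} Suppose $h$ is a metric with $R_h\geq 0$ and $Q_h\geq0$. From \eqref{Q-formula},
$$-\Delta_hJ_h+\frac n2J_h\cdot J_h=Q_h+2|A_h|^2\geq 0 .$$
Since $J_h\ge 0$ is a supersolution of $-\Delta_h+c$ with $c=\frac n2 J_h\ge0$, the strong maximum principle gives either $J_h>0$ everywhere or $J_h\equiv0$. In the case $J_h\equiv 0$, the identity forces $Q_h=-2|A_h|^2\le 0$, so $Q_h\equiv0$ and $A_h\equiv0$.

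\textbf{Continuity argument.} Let $I=\{t\in[0,1]: R_{g_t}>0\}$.
\begin{enumerate}
\item $1\in I$: by hypothesis $R_{\tilde g}\geq0$, $Q_{\tilde g}\ge0$ and $Q_{\tilde g}\not\equiv0$, so the key step gives $R_{g_1}>0$.
\item $I$ is open, since $g_t$ depends smoothly on $t$ and $M$ is compact.
\item Closedness: let $t_k\in I$ with $t_k\to t_*$. Then $R_{g_{t_*}}\geq0$. If $t_*>0$, then $Q_{g_{t_*}}\not\equiv0$, and the key step gives $R_{g_{t_*}}>0$. If $t_*=0$ and $R_g\not>0$, the key step forces $Q_g\equiv0$ and $A_g\equiv0$, $J_g\equiv 0$. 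Then \eqref{Paneitz-operator} reduces to $P_g=\Delta_g^2$, so $\int_M P_gu\,\dv=0$. But $P_gu=\frac{n-4}{2}Q_{\tilde g}u^{\frac{n+4}{n-4}}\geq0$ and $P_gu\not\equiv0$, a contradiction.
\end{enumerate}
So $I$ is a nonempty, open and closed subset of $[0,1]$, hence $I=[0,1]$, and in particular $R_g=R_{g_0}>0$.

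\textbf{Main obstacle.} The only delicate point is the endpoint $t=0$, where $Q_g$ may vanish identically and the key step alone does not suffice. I would handle it exactly by the integration argument above, exploiting that the degenerate alternative kills all lower-order terms of $P_g$.
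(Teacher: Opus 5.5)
Your proof is correct and follows the paper's argument: the same linear interpolation of conformal factors, the observation that $Q_{g_t}\ge 0$ along the path, and a continuity argument in $t$ driven by the strong maximum principle for $-\Delta J+\frac n2 J^2\ge 0$. The only difference is at the endpoint $g$. The paper rules out $R_g\equiv 0$ because it would force $Y(M,[g])=0$, whereas $R_{\tilde g}>0$ gives $Y(M,[g])>0$. You instead integrate $P_g u=\frac{n-4}{2}Q_{\tilde g}u^{\frac{n+4}{n-4}}$: its integral vanishes when $P_g=\Delta_g^2$, yet is strictly positive by the hypotheses on $Q_{\tilde g}$. Both ways of closing the argument are valid.
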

\begin{proof}
	Using \eqref{Q-formula} and $Q_{\tilde g}\ge 0$, we obtain
	\[
	0 \le -\Delta_{\tilde g}J_{\tilde g} + \frac{n}{2}J_{\tilde g}^2 .
	\]
	Applying the strong maximum principle together with $J_{\tilde g}\ge 0$ gives either $J_{\tilde g}>0$ or $J_{\tilde g}\equiv 0$. If $J_{\tilde g}\equiv 0$, then \eqref{Q-formula} yields $Q_{\tilde g}\le 0$, contradicting our assumption that $Q_{\tilde g}\ge 0$ and $Q_{\tilde g}\not\equiv 0$. Hence $J_{\tilde g}>0$  which implies $R_{\tilde g}>0$. Consequently,
	\begin{equation}\label{positiveYamabe}
		Y(M,[g])>0.
	\end{equation}
	
	Define $u_t:=t+(1-t)u$. Clearly, $u_t>0$ for all $t\in[0,1]$, and consider the conformal metrics $g_t=u_t^{\frac{4}{n-4}}g$. For any $t\in[0,1]$, we have
	\[
	Q_{g_t}
	=\frac{2}{n-4}u_t^{-\frac{n+4}{n-4}}P_g(u_t)
	=\frac{2}{n-4}u_t^{-\frac{n+4}{n-4}}
	\left(tP_g(1)+(1-t)P_gu\right).
	\]
	Thus, for all $t\in [0,1]$, one has 
	\begin{equation}\label{eq:Q_g_t}
		Q_{g_t}
		= t\,u_t^{-\frac{n+4}{n-4}}Q_g
		+ (1-t)u_t^{-\frac{n+4}{n-4}}u^{\frac{n+4}{n-4}}Q_{\tilde g}
		\ge 0.
	\end{equation}
	For $t=0$, we have $Q_{g_t}\ge 0$ and $R_{g_t}>0$. Set
	\begin{equation}\label{choice_of_t1}
		t_1 := \sup \left\{ t\in[0,1] \mid \forall s\in[0,t],\ R_{g_s}>0 \right\}.
	\end{equation}
	Since $Q_{g_t}\ge0$ for all $t\in[0,1]$, $R_{g_0}>0$, and $u_t$ is smooth, it follows that $t_1>0$. If $t_1<1$, then by continuity, one has $R_{g_{t_1}}\ge 0$. Moreover, from \eqref{eq:Q_g_t} we have $Q_{g_{t_1}}\ge 0$ and $Q_{g_{t_1}}\not\equiv 0$. Repeating the earlier argument gives $R_{g_{t_1}}>0$. Hence, there exists a small $\epsilon>0$ with $t_1+\epsilon<1$ such that $R_{g_t}>0$ for all $t\in[t_1,t_1+\epsilon]$, contradicting the definition of $t_1$ in \eqref{choice_of_t1}. Therefore $t_1=1$, and so
	$
	R_g = R_{g_1} \ge 0.
$
	Since $Q_g\ge0$, the strong maximum principle implies that either $R_g>0$ or $R_g\equiv0$. If $R_g\equiv0$, then $Y(M,[g])=0$, which contradicts \eqref{positiveYamabe}. This completes the proof.
\end{proof}

The second crucial observation of Gursky and Malchiodi \cite{GM} concerns the positivity of $P_g$ under the assumption \eqref{condition-of-GM}. The following lemma is essentially the same as Proposition 2.3 of \cite{GM}, with the only difference that we also handle the case $Q_g\equiv 0$. We define the first eigenvalue of $P_g$ as follows:
\begin{equation*}
	\lambda_1(P_g):=\inf_{u\in H^2(M)\setminus \{0\}}\frac{\int_MuP_gu\dv}{\int_Mu^2\dv}
\end{equation*}
\begin{lemma}\label{lem:positive-P_g}
		Let $(M^n,g)$ be a smooth  compact  Riemannian manifold of dimension $n\geq 5$. Suppose that the $Q$-curvature  $Q_g\geq 0$ and the scalar curvature $R_g\geq 0$. Then, 
		$\lambda_1(P_g)\geq 0$ with equality holding if and only if $Q_g\equiv 0.$ Moreover, when $\lambda_1(P_g)=0$, one has $\ker P_g=\{constants\}$.
\end{lemma}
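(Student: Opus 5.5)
Following Gursky--Malchiodi, the plan is to write the quadratic form of $P_g$ in a form where every term is visibly nonnegative, using $Q_g\ge 0$ and $J_g\ge 0$. Integrating \eqref{Paneitz-operator} against $u$ gives
\[
\int_M uP_gu\dv=\int_M\Bigl((\Delta_g u)^2-4A_g(\nabla u,\nabla u)+(n-2)J_g|\nabla u|^2+\tfrac{n-4}{2}Q_gu^2\Bigr)\dv .
\]
The term $-4A_g(\nabla u,\nabla u)$ has no sign and must be absorbed. By Bochner's formula,
\[
\int_M(\Delta_g u)^2\dv=\int_M|\nabla^2u|^2\dv+\int_M\Ric_g(\nabla u,\nabla u)\dv .
\]
Substituting $\Ric_g=(n-2)A_g+J_gg$ converts this into an identity involving $|\nabla^2 u|^2$, $A_g(\nabla u,\nabla u)$ and $J_g|\nabla u|^2$.

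Concretely, split $(\Delta u)^2=\tfrac{n-4}{n-2}(\Delta u)^2+\tfrac{2}{n-2}(\Delta u)^2$ and apply Bochner only to the second piece. Since $\tfrac{2}{n-2}\cdot (n-2)A=2A$, this alone does not cancel $-4A$, so I would instead choose the coefficient $\tfrac{4}{n-2}$ on the Bochner piece. That piece produces $4A_g(\nabla u,\nabla u)$, which cancels exactly, and leaves
\[
\int_M uP_gu\dv=\int_M\Bigl(\tfrac{n-6}{n-2}(\Delta u)^2+\tfrac{4}{n-2}|\nabla^2u|^2+\bigl(n-2+\tfrac{4}{n-2}\bigr)J_g|\nabla u|^2+\tfrac{n-4}{2}Q_gu^2\Bigr)\dv .
\]
For $n\ge 6$ every term is nonnegative. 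For $n=5$ the first coefficient $\tfrac{n-6}{n-2}$ is negative. In that case I use $|\nabla^2u|^2\ge \tfrac1n(\Delta u)^2$, which gives total coefficient
\[
\tfrac{n-6}{n-2}+\tfrac{4}{n(n-2)}=\tfrac{n^2-6n+4}{n(n-2)}=-\tfrac{1}{15}<0 \quad (n=5).
\]
So this crude bound fails, and the choice of Bochner coefficient must be optimized. With a general coefficient $a$, the identity leaves a term $(a(n-2)-4)A_g(\nabla u,\nabla u)$ that must be controlled by $J_g$. This fails because $A_g$ has no sign.

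I therefore expect $n=5$ to be the main obstacle, and plan to follow Gursky--Malchiodi's trick, adapted from Gursky's four-dimensional argument, more closely. Write $u=e^{w}$-type substitutions, or equivalently test the form against $u=v\cdot 1$ and use the $Q$-equation \eqref{Q-formula}. Then $-2|A_g|^2+\tfrac n2 J_g^2-\Delta J_g=Q_g\ge 0$ gives pointwise control of $|A_g|$ by $J_g$ modulo $\Delta J_g$. Specifically, I would bound $4|A_g(\nabla u,\nabla u)|\le 4|A_g||\nabla u|^2$ and use Cauchy--Schwarz in the form
\[
4|A||\nabla u|^2\le \epsilon|A|^2|\nabla u|^4/J+\dots .
\]
A more robust alternative is to use the trace-free part $E$ of $A_g$, for which $|A|^2=|E|^2+J^2/n$ and $A(\nabla u,\nabla u)=E(\nabla u,\nabla u)+\tfrac{J}{n}|\nabla u|^2$. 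The estimate $|E(\nabla u,\nabla u)|\le\sqrt{\tfrac{n-1}{n}}|E||\nabla u|^2$ together with $2|E|^2\le \tfrac{n-4}{2n}\cdot nJ^2-\Delta J$ from $Q\ge0$ gives $|E|\lesssim J$ up to the Laplacian term. The $\Delta J$ term would then be integrated by parts against $|\nabla u|^2$ and absorbed into the Hessian term. This step is where I am least sure the constants close.

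Once $\int uP_gu\ge \tfrac{n-4}{2}\int Q_gu^2\ge 0$ (or an equivalent nonnegative expression) is established, the equality analysis is straightforward. If $\lambda_1(P_g)=0$, a minimizer $u$ exists by compactness of $H^2\hookrightarrow L^2$. Every nonnegative term then vanishes, so $\nabla^2u=0$ (or $\nabla u=0$), hence $u$ is constant. Then $\int Q_g u^2=0$ forces $Q_g\equiv0$, and $\ker P_g$ consists of constants. Conversely, if $Q_g\equiv 0$, then $P_g1=\tfrac{n-4}{2}Q_g=0$, so $\lambda_1(P_g)=0$ with constants in the kernel. This gives the equality statement of the lemma.
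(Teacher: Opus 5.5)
\textbf{Gap: the case $n=5$.} For $n\ge 6$, your Bochner identity with coefficient $\frac{4}{n-2}$ and your equality analysis are exactly the paper's argument. For $n\ge 6$ they do not even need the paper's preliminary split into $R_g>0$ and $R_g\equiv 0$. The gap is $n=5$, which is where the lemma has real content: you never establish $\int_M\varphi P_g\varphi\,\dv\ge 0$ there.

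The paper does not prove this case from scratch either. After disposing of $R_g\equiv 0$, it imports Gursky--Malchiodi's inequality
\[
\int_M\varphi P_g\varphi\,\dv\ge\frac37\int_M(\Delta_g\varphi)^2\dv+\int_MJ_g|\nabla\varphi|_g^2\dv+\frac12\int_MQ_g\varphi^2\dv
\]
for $R_g>0$. Your sketch does not replace this, for three reasons.
\begin{enumerate}
\item The constant is wrong. Since $|A_g|^2=|E|^2+J_g^2/n$, the condition $Q_g\ge0$ gives $2|E|^2\le\frac{n^2-4}{2n}J_g^2-\Delta_gJ_g$, which is $\frac{21}{10}J_g^2-\Delta_gJ_g$ for $n=5$. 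It does not give $\frac{n-4}{2}J_g^2-\Delta_gJ_g$.
\item Turning $|E|$ into $|E|^2$ needs a weighted AM--GM with $J_g$ in the denominator. So you first need the strong maximum principle dichotomy $J_g>0$ or $J_g\equiv0$, which follows from $-\Delta_gJ_g+\frac n2J_g^2\ge2|A_g|^2\ge0$. The case $J_g\equiv0$ must be handled separately: there $Q_g\ge0$ forces $A_g\equiv0$ and $Q_g\equiv0$, so $\int\varphi P_g\varphi=\int(\Delta_g\varphi)^2$.
\item In the raw form there is no Hessian term to absorb the $\Delta_gJ_g$ contribution into. Applying Bochner fully kills $A_g$ and leaves your $-\frac1{15}$. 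You discarded the intermediate Bochner coefficient ``because $A_g$ has no sign'', but that is exactly where the $Q_g\ge0$ control of $E$ should enter. The two halves of your plan have to be combined.
\end{enumerate}

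\textbf{How to close it.} Done this way, your route works and gives a self-contained proof for $n=5$, a genuine alternative to citing Gursky--Malchiodi. Assume $J_g>0$ and write $A_g=E+\frac{J_g}{5}g$, so that $\Ric_g=3E+\frac85J_gg$. Apply Bochner to only half of the $E$-term:
\begin{align*}
\int_M\varphi P_g\varphi\,\dv&=\int_M\Bigl((\Delta_g\varphi)^2-4E(\nabla\varphi,\nabla\varphi)+\tfrac{11}{5}J_g|\nabla\varphi|^2+\tfrac12Q_g\varphi^2\Bigr)\dv\\
&=\int_M\Bigl(\tfrac13(\Delta_g\varphi)^2+\tfrac23|\nabla^2\varphi|^2-2E(\nabla\varphi,\nabla\varphi)+\tfrac{49}{15}J_g|\nabla\varphi|^2+\tfrac12Q_g\varphi^2\Bigr)\dv .
\end{align*}
Three estimates then control the $E$-term.
\begin{enumerate}
\item Pointwise, $2|E(\nabla\varphi,\nabla\varphi)|\le\bigl(|E|^2/J_g+J_g\bigr)|\nabla\varphi|^2$.
\item From $Q_g\ge0$, $|E|^2/J_g\le\frac{21}{20}J_g-\frac{\Delta_gJ_g}{2J_g}$.
\item Integrating by parts,
\[
-\int_M\frac{\Delta_gJ_g}{J_g}|\nabla\varphi|^2\dv=\int_M\Bigl(\frac{2\nabla^2\varphi(\nabla J_g,\nabla\varphi)}{J_g}-\frac{|\nabla J_g|^2|\nabla\varphi|^2}{J_g^2}\Bigr)\dv\le\int_M|\nabla^2\varphi|^2\dv .
\]
\end{enumerate}
Together these give
\[
\int_M\varphi P_g\varphi\,\dv\ge\int_M\Bigl(\tfrac13(\Delta_g\varphi)^2+\tfrac16|\nabla^2\varphi|^2+\tfrac{73}{60}J_g|\nabla\varphi|^2+\tfrac12Q_g\varphi^2\Bigr)\dv .
\]
Your equality analysis then applies verbatim: vanishing of $\int(\Delta_g u)^2$ forces $u$ to be constant, and then $Q_g\equiv0$.
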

\begin{proof}
For any $\varphi\in H^2(M)$,	using \eqref{Paneitz-operator}, one has
	$$\int_M\varphi P_g\varphi\dv=\int_M\left((\Delta_g\varphi)^2-4A_g(\nabla\varphi,\nabla \varphi)+(n-2)J_g|\nabla \varphi|_g^2+\frac{n-4}{2}Q_g\varphi^2\right)\dv.$$
	
	First, for $Q_g\geq 0$ and $R_g\geq 0$, due to the strong maximum principle, one has $R_g>0$ or $R_g\equiv 0$.
	
	When $R_g\equiv 0$, using \eqref{Q-formula} and the assumption $Q_g\geq 0$, there holds $|\Ric_g|^2_g\equiv 0$ and $Q_g\equiv 0$.
	Then, one has 
	$$\int_M\varphi P_g\varphi\dv=\int_M(\Delta_g\varphi)^2\dv\geq 0.$$
It is easy to easy that $\lambda_1(P_g)=0$  and $\ker P_g=\{constants\}$.

Thus, in the subsequent proof, we only need to deal with  the case $Q_g \ge 0$ and $R_g > 0$.

For $n\geq 6$,	applying Bochner formula, one has
$$\int_M(\Delta_g\varphi)^2\dv=\int_M|\nabla^2\varphi|_g^2\dv+\int_M\Ric_g(\nabla\varphi,\nabla\varphi)\dv.$$
	Inserting it into previous identity, there holds
	\begin{align*}
		&\int_M\varphi P_g\varphi\dv\\
		=&\frac{n-6}{n-2}\int_M(\Delta_g\varphi)^2\dv +\frac{4}{n-2}\int_M|\nabla^2\varphi|^2_g\dv\\
		&+\frac{(n-2)^2+4}{n-2}\int_MJ_g|\nabla\varphi|^2_g\dv+\frac{n-4}{2}\int_MQ_g\varphi^2\dv.
	\end{align*}
When $n\geq 6$, $Q_g\geq 0$ and $R_g>0$, it is obvious to see  that $\lambda_1(P_g)\geq 0$. If $\lambda_1(P_g)=0$, then the above identity immediately implies that $Q_g\equiv 0$ and $\ker P_g=\{constants\}$.  Conversely, if $Q_g\equiv 0$, one has  $\ker P_g=\{constants\}$ and then $\lambda_1(P_g)=0$.

For $n=5$, Gursky and Malchiodi \cite{GM} modified the trick in \cite{GM} to obtain that 
$$\int_M\varphi P_g\varphi\dv\geq \frac{3}{7}\int_M(\Delta_g\varphi)^2\dv +\int_MJ_g|\nabla \varphi|^2_g\dv+\frac{1}{2}\int_MQ_g\varphi^2\dv.$$
Thus, $\lambda_1(P_g)\geq 0$. When $\lambda_1(P_g)=0$, one must have $Q_g\equiv 0$ and then $\ker P_g=\{constants\}$. Conversely, if $Q_g\equiv 0$, one has $\ker P_g=\{constants\}$ from \eqref{Paneitz-operator} and then $\lambda_1(P_g)=0$. Thus, we finish the proof.
\end{proof}

\section{Hang-Yang's  identity revisited }\label{sec:H-Y revisit}

In this section, we begin by recalling the remarkable Hang-Yang's identity established in \cite{Hang-Yang 15 IMRN}. For the reader's convenience, we review some necessary background and employ the same notations of \cite{Hang-Yang 15 IMRN}. Since $Y(M,[g])>0$, the conformal Laplacian $L_g$ defined in \eqref{eq:conformal-Laplacian} is positive and hence possesses a symmetric, positive, smooth Green's function $G_L(x,y)$ (see \cite{Aubin}). With the help of the conformal normal coordinate $x_1,\cdots, x_n$ at $p$ (see \cite{LP}), denote $r=|x|$, then one has
\begin{equation}\label{G_L-asymp}
	G_L(p,\cdot)=c_1(n)r^{2-n}(1+O^{(4)}(r))
\end{equation}
where $c_1(n)$ is positive constant depending only on $n$ and  $f=O^{(m)}(r^{\theta})$ means $f\in C^m$ and $\partial_{i_1,\cdots, i_k}f=O(r^{\theta-k})$ for $0\leq k\leq m.$
For a fixed pole $p\in M$, set
\begin{equation*}
	\widehat g_p=G_L(p,\cdot)^{4/(n-2)}g
	\quad\text{on }M\setminus\{p\}.
\end{equation*}
In Proposition 2.1 of  \cite{Hang-Yang 15 IMRN}, Hang and Yang established the following identity
\begin{equation}\label{eq:HY-identity}
	P_g\bigl(G_L(p,\cdot)^{\frac{n-4}{n-2}}\bigr)
	=c_2(n)\delta_p(\cdot)
	-\frac{n-4}{(n-2)^2}G_L(p,\cdot)^{\frac{n-4}{n-2}}
	\left|\Ric_{\widehat g_p}\right|_g^2
\end{equation}
in the distributional sense, where $c_2(n)>0$ is a positive dimensional constant and $\delta_p(\cdot)$ is the Dirac operator.
As pointed out in \cite{Hang-Yang 15 IMRN}, near $p$, one has
\begin{equation}\label{Gamma-asymp}
	G_L(p,\cdot)^{\frac{n-4}{n-2}}
	\left|\Ric_{\widehat g_p}\right|_g^2=O(r^{2-n}).
\end{equation}
Define
\begin{equation}\label{eq:H-def}
	H(p,\cdot):=c_2(n)^{-1}G_L(p,\cdot)^{\frac{n-4}{n-2}}
\end{equation}
and
\begin{equation}\label{eq:Gamma-def}
	\Gamma(p,\cdot)
	:=\frac{n-4}{(n-2)^2c_2(n)}G_L(p,\cdot)^{\frac{n-4}{n-2}}
	\left|\Ric_{\widehat g_p}\right|_g^2.
\end{equation}
Then, in the distributional sense,  there holds
\begin{equation}\label{eq:HY-normalized}
	P_{g}H(p,\cdot)=\delta_p(\cdot)-\Gamma(p,\cdot).
\end{equation}
Obviously, $H(x,y)$ is symmetric. Then, using \eqref{eq:HY-normalized}, $\Gamma(x,y)$ is also symmetric. Meanwhile, both $H(x,y)$ and  $\Gamma(x,y)$ are smooth except the diagonal. 
Then, we define two integral operators by
\begin{equation*}
	T_H(f)(x):=\int_MH(x,y)f(y)\ud\mu_g(y),
\end{equation*}
and
\begin{equation*}
	T_{\Gamma}(f)(x):=\int_M\Gamma(x,y)f(y)\ud\mu_g(y).
\end{equation*}
From \eqref{G_L-asymp} and \eqref{Gamma-asymp}, we know that these two operator are well-defined on $C(M)$.

As we mentioned earlier, the following lemma is crucial to this paper and is inspired by our previous joint work with Xu \cite{Li-Xu}.
\begin{lemma}\label{lem:positive-R_g} 
	Let $(M,g)$   be a smooth  compact  Riemannian manifold with dimension $n\geq 5$ and positive Yamabe invariant.
	Let $f\in C(M)$ satisfy $f\geq 0$ and $f\not\equiv 0$. 
	Then, there holds
$$T_H(f)>0, \quad L_g((T_H(f))^{\frac{n-2}{n-4}})\geq 0.$$
	If $f(x)\geq\varepsilon$ for all $x\in M$ and some positive constant $\varepsilon$, then there holds
	$$
L_g((T_H(f))^{\frac{n-2}{n-4}})>0.
	$$
\end{lemma}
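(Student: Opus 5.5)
The plan is to reduce everything to a pointwise convexity (Cauchy--Schwarz) inequality for a nonlinear operator that annihilates $G_L(\cdot,y)^{\frac{n-4}{n-2}}$ away from $y$. Positivity of $T_H(f)$ is immediate: since $Y(M,[g])>0$, the Green's function satisfies $G_L>0$, so $H(x,y)>0$ off the diagonal. With $f\ge 0$ and $f\not\equiv 0$, this gives $T_H(f)(x)=\int_M H(x,y)f(y)\,\ud\mu_g(y)>0$ for every $x$.

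Set $p:=\frac{n-2}{n-4}>1$, $a:=\frac{4(n-1)}{n-2}$, and $w_y(x):=H(x,y)$, so that $w_y^p=c_2(n)^{-p}G_L(\cdot,y)$ and $L_g(w_y^p)=0$ on $M\setminus\{y\}$. For a positive $C^2$ function $w$, expanding $\Delta_g(w^p)$ gives
\begin{equation*}
L_g(w^p)=p\,w^{p-1}N(w),\qquad N(w):=-a\Delta_g w-a(p-1)\frac{|\nabla w|_g^2}{w}+\frac{R_g}{p}w .
\end{equation*}
Hence $N(w_y)(x)=0$ for $x\neq y$. Let $u:=T_H(f)=\int_M w_y\,f(y)\,\ud\mu_g(y)$. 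Using \eqref{G_L-asymp}, $w_y=O^{(4)}(r^{4-n})$, so $\nabla w_y=O(r^{3-n})$ and $\Delta_g w_y=O(r^{2-n})$ are integrable in $y$. This lets us differentiate under the integral, either pointwise or at least in the $W^{2,q}$/distributional sense. The linear part of $N$ then passes inside the integral, and we obtain
\begin{equation*}
N(u)=\int_M N(w_y)f\,\ud\mu_g+a(p-1)\Bigl(\int_M\frac{|\nabla w_y|^2}{w_y}f\,\ud\mu_g-\frac{|\nabla u|^2}{u}\Bigr).
\end{equation*}
The first integral vanishes, since the integrand is zero for $y\neq x$ and the set $\{y=x\}$ is null. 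By Cauchy--Schwarz, $|\nabla u|^2=\bigl|\int \nabla w_y f\bigr|^2\le u\int \frac{|\nabla w_y|^2}{w_y}f$. Therefore $N(u)\ge 0$, and so $L_g(u^{p})=p\,u^{p-1}N(u)\ge 0$, which is the second assertion of Lemma \ref{lem:positive-R_g}.

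For the strict inequality I would write the Cauchy--Schwarz defect exactly:
\begin{equation*}
\int_M\frac{|\nabla w_y|^2}{w_y}f-\frac{|\nabla u|^2}{u}=\frac{1}{2u}\int_M\!\!\int_M w_yw_z\bigl|\nabla\log w_y-\nabla\log w_z\bigr|^2 f(y)f(z)\,\ud\mu_g(y)\ud\mu_g(z).
\end{equation*}
Suppose $f\ge\varepsilon$ and the right-hand side vanished at some $x$. Then $\nabla_x\log G_L(x,y)$ would be independent of $y$ for a.e. $y\in M$, hence for all $y\neq x$ by continuity. This is impossible: by \eqref{G_L-asymp}, $|\nabla_x\log G_L(x,y)|\sim (n-2)/d(x,y)\to\infty$ as $y\to x$, while it stays bounded for $y$ far from $x$. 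So $N(u)(x)>0$ at every $x$, which gives $L_g(u^p)>0$.

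The main obstacle is regularity at the diagonal. The integrands $\Delta_g w_y\sim r^{2-n}$ are only borderline integrable, so I must justify that $u\in C^2$, or else interpret $L_g(u^p)\ge 0$ in the distributional sense. I also need to check that no Dirac mass is produced: $\Delta_g(r^{4-n})$ is locally integrable when $n\ge 5$, so no delta term appears. My first approach would be to truncate, replacing $w_y$ by its restriction to $d(x,y)>\delta$ and using a cutoff, apply the pointwise argument there, and let $\delta\to0$ using the $O^{(4)}$ asymptotics to control the boundary terms. Elliptic regularity ($u\in W^{3,q}$ for $q<\frac{n}{n-1}$, hence $C^{1,\alpha}$, with second derivatives given by principal-value integrals) would then upgrade the conclusion to the pointwise statement.
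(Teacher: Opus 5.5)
Your proposal is correct and follows essentially the same route as the paper. Both arguments rewrite $L_g(H(\cdot,y)^{\frac{n-2}{n-4}})=0$ as an identity for $\Delta_g w_y/w_y$, differentiate under the integral, and conclude with Cauchy--Schwarz in its variance form (the paper phrases this through the probability measures $d\nu_x$). Your blow-up argument for $|\nabla_x\log G_L(x,y)|$ as $y\to x$ supplies the strict-positivity step that the paper only calls ``easy''.

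The regularity issue you worry about is milder than you suggest. The function $r^{2-n}$ is comfortably integrable, not borderline, and the bounds $|\nabla_x^kH(x,y)|=O(d(x,y)^{4-n-k})$ for $k\le 3$ already justify pointwise differentiation under the integral (the paper simply cites Lieb--Loss). Your fallback claim that $W^{3,q}$ with $q<\frac{n}{n-1}$ embeds in $C^{1,\alpha}$ is false for $n\ge 5$, but the argument does not need it.
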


\begin{proof}
	For brevity, set the notations
	$h_y(x):=H(x,y)$ and 
	$U(x):=T_H(f)(x)$.
	From \eqref{eq:H-def}, 
	the function $h_y^{\frac{n-2}{n-4}}$ is a positive constant multiple of
	$G_L(x,y)$, and therefore
	\[
	L_g(h_y^{\frac{n-2}{n-4}}(x))=0, \quad x\in M\setminus\{y\}.
	\]
	Expanding the above  identity and dividing   by $h_y^{\frac{n-2}{n-4}}$ yield
	\begin{equation}\label{eq:h-log-equation}
		\frac{\Delta_g h_y}{h_y}
		=\frac{n-4}{4(n-1)}R_g-\frac{2}{n-4}|\nabla\log h_y|_g^2, \quad x\in M\setminus\{y\}.
	\end{equation}
	For each  fixed point $x$, set the  measure notation
	\begin{equation*}
		d\nu_x(y):=\frac{h_y(x)f(y)}{U(x)}\ud\mu_g(y)
	\end{equation*}
	Using \eqref{G_L-asymp}, one has
	$$h_y(x)=O(r^{4-n}), \quad 
	|\nabla_x\log h_y(x)|_g=O(r^{-1})
$$
near the singular point  $y$.
	Then, using Theorem 6.21 in \cite{LL},   we obtain that 
	\begin{equation*}
		\frac{\nabla U}{U}=\int_M\nabla_x\log h_y(x)\ud\nu_x(y)
	\end{equation*}
	and
	\begin{equation*}
		\frac{\Delta_g U}{U}
		=\frac{n-4}{4(n-1)}R_g-\frac{2}{n-4}\int_M|\nabla_x\log h_y(x)|^2\ud\nu_x(y).
	\end{equation*}
	Consequently,
	\begin{align*}
		\frac{L_g(U^{\frac{n-2}{n-4}})}{U^{\frac{n-2}{n-4}}}
		&=-\frac{4(n-1)}{n-4}\frac{\Delta_g U}{U}
		-\frac{8(n-1)}{(n-4)^2}\frac{|\nabla U|^2}{U^2}+R_g\\
		&=\frac{8(n-1)}{(n-4)^2}\left(
		\int_M|\nabla_x\log h_y(x)|^2\ud\nu_x(y)
		-\left|\int_M\nabla_x\log h_y(x)\ud\nu_x(y)\right|^2
		\right).
	\end{align*}
	With the help of Fubini's theorem, we can rewrite  the last term as follows
	\begin{align*}
		&\int_M|\nabla_x\log h_y(x)|^2\ud\nu_x(y)
		-\left|\int_M\nabla_x\log h_y(x)\ud\nu_x(y)\right|^2\\
		=&\frac{1}{2}U^{-2}\iint_{M\times M}
		h_y(x)h_z(x)f(y)f(z)|\nabla_x\log h_y(x)-\nabla_x\log h_z(x)|^2
		\ud\mu_g(y)\ud\mu_g(z).
	\end{align*}
	On one hand, using the above identity, obviously, one has  $L_g(U^{\frac{n-2}{n-4}})\geq 0.$
	On the other hand, if  $f\geq\varepsilon>0$, using the  identity again, it is easy to show that  the above identity is strictly positive. Then, one has $L_g(U^{\frac{n-2}{n-4}})>0.$
	Thus, we finish the proof.
\end{proof}

\section{Spectrum of $T_\Gamma$ }\label{sec:spectral}
In this section, we will recall several results of Hang and Yang from \cite{Hang-Yang note} concerning the spectrum of the operator $T_\Gamma$.
The first proposition  essentially has been established in \cite{Hang-Yang note, Hang-Yang 15 IMRN}. For the reader's convenience, we state as follows and sketch the proof.
\begin{prop}\label{prop:operator-identity} 	Let $(M,g)$ of  dimension $n\geq 5$ be a smooth  compact  Riemannian manifold which is not conformally equivalent to standard sphere.
	For each $f\in C(M)$, there holds
	\begin{equation}\label{eq:P-T-H}
		P_g(T_H(f))=f-T_\Gamma(f)
	\end{equation}
	in the distributional sense.  Moreover, $T_\Gamma:C(M)\to C(M)$ is a positive
	compact operator and  the spectral radius $r_\sigma(T_\Gamma)>0$.
\end{prop}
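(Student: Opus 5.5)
The proof splits into three parts: the operator identity, compactness and positivity of $T_\Gamma$, and positivity of the spectral radius.

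\textbf{The identity \eqref{eq:P-T-H}.} I would integrate the distributional identity \eqref{eq:HY-normalized} against $f$. Fix a test function $\varphi\in C^\infty(M)$. By Fubini and the symmetry of $H$,
\[
\int_M T_H(f)\,P_g\varphi\,\dv=\int_M f(y)\left(\int_M H(x,y)P_g\varphi(x)\,\ud\mu_g(x)\right)\ud\mu_g(y).
\]
By \eqref{eq:HY-normalized}, the inner integral equals $\varphi(y)-\int_M\Gamma(x,y)\varphi(x)\,\ud\mu_g(x)$. Using Fubini again together with the symmetry of $\Gamma$, this gives $\int_M (f-T_\Gamma f)\varphi\,\dv$. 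Fubini is justified because $H=O(r^{4-n})$ and $\Gamma=O(r^{2-n})$ are both integrable uniformly in the other variable, by \eqref{G_L-asymp} and \eqref{Gamma-asymp}.

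\textbf{Compactness and positivity of $T_\Gamma$.} Positivity is immediate: $\Gamma\ge 0$ pointwise by \eqref{eq:Gamma-def}, since $G_L>0$. For compactness, I would show that $T_\Gamma$ maps bounded sets of $C(M)$ into equicontinuous bounded sets and then apply Arzel\`a--Ascoli.
\begin{itemize}
\item Boundedness follows from $\sup_x\int_M\Gamma(x,y)\,\ud\mu_g(y)<\infty$, which holds because $\Gamma(x,y)\le C d(x,y)^{2-n}$ uniformly. This uniform bound comes from the uniform version of \eqref{G_L-asymp} in conformal normal coordinates, with the pole varying smoothly.
\item For equicontinuity, split $\Gamma=\Gamma\chi_{\{d(x,y)<\delta\}}+\Gamma\chi_{\{d(x,y)\ge\delta\}}$. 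The near part contributes at most $C\delta^2\|f\|_\infty$. The far part is a continuous kernel, so it yields an equicontinuous family.
\end{itemize}
Hence $T_\Gamma$ is compact on $C(M)$.

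\textbf{Positivity of the spectral radius.} This is the main obstacle, and the one place where the hypothesis that $M$ is not conformal to the round sphere enters. I would first show $\Gamma\not\equiv 0$, in fact $\Gamma(p,\cdot)\not\equiv 0$ for every $p$. If $\Ric_{\widehat g_p}\equiv 0$ on $M\setminus\{p\}$, then $(M\setminus\{p\},\widehat g_p)$ is a Ricci-flat asymptotically flat manifold with positive Yamabe invariant. By the positive mass theorem (or Hang--Yang's argument via the mass / Bishop--Gromov volume comparison), it is isometric to $\R^n$, so $M$ is conformal to $\ms^n$, a contradiction. Continuity then gives $\Gamma(p,q)>0$ on a neighborhood $U\times V$ of some pair $(p,q)$.

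To make $r_\sigma(T_\Gamma)>0$ I would not rely on a single nonzero value, since positive compact operators can be quasinilpotent. Instead I would aim for uniform positivity on the diagonal. The plan is to show $\Gamma(x,x')\ge c>0$ for all $x,x'$ near the diagonal: along the diagonal $\Gamma(p,y)\sim c\,|W(p)|^2 d(p,y)^{2-n}$ or a higher-order expansion term, though this is delicate where $W(p)=0$.

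A cleaner route avoids this: use $T_\Gamma^{k}$ and the Krein--Rutman / Gelfand formula. Suppose $r_\sigma(T_\Gamma)=0$. Then $f=T_H(\cdot)$ relations iterate: from \eqref{eq:P-T-H}, $(I-T_\Gamma)$ is invertible with inverse $\sum_k T_\Gamma^k$, and the same holds for $(I-tT_\Gamma)$ for every $t>0$. So $P_g T_H(I-tT_\Gamma)^{-1}=I$ on a suitably rescaled problem. I would then derive a contradiction by testing with $1$ and using the mass term, as in \cite{Hang-Yang note}.

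My concrete first attempt, though, is the more hands-on argument: find a ball $B$ with $\Gamma\ge c$ on $B\times B$. Such a ball exists since $\Gamma(p,\cdot)\not\equiv 0$ and $\Gamma$ is continuous off the diagonal; pick $p_0$ with $|\Ric_{\widehat g_{p_0}}|(q_0)\neq 0$ and a small $B$ around $q_0$, chosen so that $p$ varies near $q_0$ too, which requires $\Gamma(q_0,q_0')>0$ for nearby pairs. With $\chi_B$ and $\Gamma\ge c\chi_{B\times B}$, positivity gives $T_\Gamma^k\chi_B\ge (c|B|)^k\chi_B$, hence $r_\sigma(T_\Gamma)\ge c|B|>0$.
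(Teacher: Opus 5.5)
\textbf{Gap: the proof of $r_\sigma(T_\Gamma)>0$.} Your derivation of \eqref{eq:P-T-H}, the positivity and compactness of $T_\Gamma$, and the fact that $\Gamma(p,\cdot)\not\equiv 0$ for every $p$ all match the paper. (For the last point, Ricci-flatness of the asymptotically flat $\widehat g_p$ would force $\R^n$, hence $M$ conformal to $\ms^n$.)

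The failure is in the last step. Your concrete argument needs $\Gamma\ge c>0$ on $B\times B$ for some ball $B$, which means a lower bound at pairs arbitrarily close to the diagonal. You flag that this ``requires $\Gamma(q_0,q_0')>0$ for nearby pairs'' but never prove it, and in general it is false. Take a locally conformally flat region, for example on $\ms^1\times\ms^{n-1}$. After a conformal change making $g$ flat near $p$, which only multiplies $\Gamma$ by bounded positive factors, one has $G_L(p,\cdot)=c\,r^{2-n}+A(p)+\cdots$. In inverted coordinates $\widehat g_p$ is then Schwarzschild-like with $|\Ric_{\widehat g_p}|_{\widehat g_p}=O(|z|^{-n})$. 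Converting back to $g$-norms gives $\Gamma(p,y)=O(d(p,y)^{n-4})\to 0$ as $y\to p$. So no such $B$ exists there, and your diagonal-expansion idea fails for the same reason. The ``cleaner route'' via $(I-tT_\Gamma)^{-1}$ and the mass is not an actual argument.

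\textbf{The fix uses only what you already proved.} Since $\Gamma(p,\cdot)\ge 0$ and $\Gamma(p,\cdot)\not\equiv0$ for every $p$, the function $T_\Gamma(1)(p)=\int_M\Gamma(p,y)\,\ud\mu_g(y)$ is strictly positive everywhere. It is also continuous, because you showed $T_\Gamma$ maps $C(M)$ into $C(M)$. On the compact $M$ this gives $c:=\min_M T_\Gamma(1)>0$. Positivity of $T_\Gamma$ then yields $T_\Gamma^k(1)\ge c\,T_\Gamma^{k-1}(1)\ge\cdots\ge c^k$, so $\|T_\Gamma^k\|\ge c^k$, and Gelfand's formula gives $r_\sigma(T_\Gamma)\ge c>0$. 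This is exactly the paper's argument.

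Your worry about quasinilpotent positive operators does not apply here. The lower bound is against the constant function $1$, an interior point of the positive cone of $C(M)$, not against a function supported in a small set.

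Alternatively, your off-diagonal bound $\Gamma\ge c$ on $U\times V$ plus the symmetry of $\Gamma$ gives $T_\Gamma^2\chi_U\ge c^2|U|\,|V|\,\chi_U$. This also suffices after replacing $\chi_U$ by a continuous cutoff.
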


\begin{proof}
For any smooth $\varphi$, using Fubini's theorem and Hang-Yang's identity \eqref{eq:HY-normalized}, there holds
\begin{align*}
	&\int_MP_g(\varphi(y))T_H(f)(y)\ud\mu_g(y)\\
	=&\int_Mf(x)\int_MP_g(\varphi(y))H(x,y)\ud\mu_g(y)\ud\mu_g(x)\\
	=&\int_Mf(x)\varphi(x)\ud\mu_g(x)-\int_Mf(x)T_{\Gamma}(\varphi)(x)\ud\mu_g(x)\\
	=&\int_M(f(x)-T_{\Gamma}(f)(x))\varphi(x)\ud\mu_g(x)
\end{align*}
which proves \eqref{eq:P-T-H}.

	 From the definition \eqref{eq:Gamma-def}, one has $\Gamma(x,y)\geq 0$.
	 It is not hard to see $\int_M\Gamma(p,y)\ud\mu_g(y)$ is continuous with respect to $p$. Thus, we set
	 $$c:=\min_{p\in M}\int_M\Gamma(p,y)\ud\mu_g(y).$$
	 First, by \eqref{Gamma-asymp},   $c$ must be  finite and nonnegative. Now,
	 we claim that $c>0$ by following the argument of the proof of Lemma 3.1 in \cite{Hang-Yang 15 IMRN}. 
	 We argue by contraction and assume that $c=0$. Then, for some $p\in M$, one has
	 $|\Ric_{\widehat g_p}|_g=0$ on $M\setminus\{p\}$. Since $(M\setminus\{p\},\widehat g_p)$ is asymptotically flat, it follows from the 
	 relative volume comparison theorem that $(M\setminus\{p\},\widehat g_p)$  is isometric to the standard $\R^n$.
 In particular,  $(M^n, g)$ must be locally conformally flat and simply connected compact
manifold, hence, it is conformal to the standard $\ms^n$ by \cite{Kuiper} which contradicts our assumption.
	
	Since $$\|T_\Gamma(\varphi)\|_{L^\infty(M)}\leq \left(\max_{p\in M}\int_M\Gamma(p,y)\ud\mu_g(y)\right)\|\varphi\|_{L^\infty(M)},$$
 one has
$  \|T_\Gamma\|<+\infty.$

Using \eqref{Gamma-asymp}, the  standard singular 
 integral  estimates imply  that $T_\Gamma$ maps bounded
sets in $C(M)$ into an equicontinuous family; hence it is compact by the
Arzel\`a-Ascoli theorem.

Meanwhile, one has 
$$\|T_\Gamma^k\|\geq \| T_\Gamma^k(1)\|_\infty\geq c^k.$$
With	 the help of Gelfand's formula (see \cite{Lax}), one has
$$r_\sigma(T_\Gamma)=\lim_{k\to\infty}\|T_\Gamma^k\|^{\frac{1}{k}}\geq c>0.$$
Thus, we finish the proof.
\end{proof}

The final crucial step during  the proof of Theorems \ref{thm:main theorem} and Theorem \ref{thm:positive Q imply positive P}  is inspired by Hang and Yang. In \cite{Hang-Yang note}, they established a key result relating the range of the spectral radius to the existence of a conformal metric with positive $Q$-curvature. For the reader's convenience, we repeat the statement of their result as follows for subsequent use. \begin{theorem}\label{thm:Hang-Yang theorem}(Theorem 3.5 in \cite{Hang-Yang note})
Let $(M,g)$  be a smooth  compact  Riemannian manifold with dimension $n\geq 5$ and  positive Yamabe invariant $Y(M,[g])$.  Then,
	the existence of a conformal metric $\tilde g\in[g]$ with $Q_{\tilde g}>0$ is equivalent to $r_{\sigma}(T_\Gamma)<1.$
\end{theorem}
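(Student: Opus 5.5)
In the sphere case $\Gamma\equiv 0$, so $r_\sigma(T_\Gamma)=0<1$, and the round metric has $Q>0$; both sides of Theorem \ref{thm:Hang-Yang theorem} hold. We may therefore assume that $(M,g)$ is not conformal to $(\ms^n,g_0)$. Then Proposition \ref{prop:operator-identity} applies, giving the identity \eqref{eq:P-T-H} and telling us that $T_\Gamma$ is a positive compact operator on $C(M)$. The plan is to prove the two directions separately, using only \eqref{eq:HY-normalized}, \eqref{eq:P-T-H} and the positivity of $T_H$ from Lemma \ref{lem:positive-R_g}.

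\emph{($r_\sigma(T_\Gamma)<1\Rightarrow$ positive $Q$.)} Since $r_\sigma(T_\Gamma)<1$, the Neumann series $f:=\sum_{k\ge 0}T_\Gamma^k(1)$ converges in $C(M)$ and solves $f-T_\Gamma f=1$. Because $T_\Gamma$ is positive, $f\ge 1$. Set $u:=T_H(f)$.
\begin{itemize}
\item By Lemma \ref{lem:positive-R_g}, $u>0$ on $M$.
\item By \eqref{eq:P-T-H}, $P_g u=f-T_\Gamma f=1$ in the distributional sense.
\item Elliptic regularity for the fourth-order operator $P_g$, whose leading part is $\Delta_g^2$, then shows that $u$ is smooth.
\end{itemize}
By \eqref{Q-def}, the metric $\tilde g=u^{\frac{4}{n-4}}g$ satisfies $Q_{\tilde g}=\frac{2}{n-4}u^{-\frac{n+4}{n-4}}>0$.

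\emph{(Positive $Q\Rightarrow r_\sigma(T_\Gamma)<1$.)} Write $\tilde g=u^{\frac{4}{n-4}}g$ with $u>0$ smooth and $Q_{\tilde g}>0$. Then $\psi:=P_g u=\frac{n-4}{2}u^{\frac{n+4}{n-4}}Q_{\tilde g}$ is smooth and strictly positive.
\begin{itemize}
\item Pair \eqref{eq:HY-normalized} at the pole $x$ with the test function $u$, using the symmetry of $H$ and $\Gamma$. This gives the representation formula $u=T_H(\psi)+T_\Gamma(u)$.
\item By Lemma \ref{lem:positive-R_g}, $T_H(\psi)$ is continuous and positive, so $T_H(\psi)\ge \varepsilon u$ for some $\varepsilon\in(0,1)$, by compactness of $M$.
\item Hence $T_\Gamma u\le (1-\varepsilon)u$. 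Iterating with the positivity of $T_\Gamma$ gives $T_\Gamma^k u\le(1-\varepsilon)^k u$.
\end{itemize}
Let $m:=\min_M u>0$. Positivity gives $\|T_\Gamma^k\|=\|T_\Gamma^k(1)\|_\infty\le m^{-1}\|T_\Gamma^k u\|_\infty\le m^{-1}\|u\|_\infty(1-\varepsilon)^k$. Gelfand's formula then yields $r_\sigma(T_\Gamma)\le 1-\varepsilon<1$.

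The main technical obstacle is the justification steps rather than the algebra. First, the representation formula $u=T_H(P_gu)+T_\Gamma(u)$ requires that \eqref{eq:HY-normalized} may be tested against smooth functions at every pole $x$. Here I would rely on the asymptotics \eqref{G_L-asymp} and \eqref{Gamma-asymp} to check that all integrals converge absolutely: the boundary terms near $x$ are of order $r^{4-n}\cdot r^{n-1}$ after cutting out small balls, and they produce exactly $u(x)$. Second, the regularity of $T_H(f)$ for merely continuous $f$ must be handled by a bootstrap. In the first direction $P_g u=1$ holds distributionally with $u\in C(M)$, so the standard $L^p$ and Schauder theory for $P_g$ upgrades $u$ to $C^\infty$.
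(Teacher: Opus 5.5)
The paper only quotes this theorem from Hang--Yang and gives no proof, so I am judging your argument on its own. There is a genuine gap in your first direction; it is inherited from the paper's Proposition~\ref{prop:operator-identity}.

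Your second direction is correct. The formula $u=T_H(P_gu)+T_\Gamma(u)$ is literally \eqref{eq:HY-normalized} with pole $x$ tested against $u$. It needs neither symmetry nor a boundary-term computation. (Your count is also off: terms of size $r^{4-n}\cdot r^{n-1}$ vanish, and $u(x)$ comes from $u\,\partial_\nu\Delta_g H$ with $\partial_\nu\Delta_g H\sim r^{1-n}$.) The weighted bound $T_\Gamma^k u\le(1-\varepsilon)^k u$ together with Gelfand's formula then finishes.

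The gap is the step $P_gu=f-T_\Gamma f=1$ taken from \eqref{eq:P-T-H}. The last line of the proof of Proposition~\ref{prop:operator-identity} moves $T_\Gamma$ from $\varphi$ onto $f$, i.e.\ it uses $\Gamma(x,y)=\Gamma(y,x)$. That symmetry is asserted after \eqref{eq:HY-normalized} and you invoke it too, but it is false in general. For $\tilde g=\rho^{\frac{4}{n-4}}g$ one has:
\begin{itemize}
\item $G_{L,\tilde g}(x,y)=(\rho(x)\rho(y))^{-\frac{n-2}{n-4}}G_{L,g}(x,y)$;
\item $\widehat{\tilde g}_x$ is a constant multiple of $\widehat g_x$, so it has the same Ricci tensor;
\item $|\cdot|^2_{\tilde g}=\rho^{-\frac{8}{n-4}}|\cdot|^2_g$ on $2$-tensors.
\end{itemize}
So \eqref{eq:Gamma-def} gives
\[
\Gamma_{\tilde g}(x,y)=\rho(x)^{-1}\rho(y)^{-\frac{n+4}{n-4}}\,\Gamma_g(x,y).
\]
If $\Gamma_g$ and $\Gamma_{\tilde g}$ were both symmetric, then $\rho(x)=\rho(y)$ whenever $\Gamma_g(x,y)>0$. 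This fails for a suitable $\rho$, because $\Gamma_g(x,\cdot)\not\equiv 0$ off the sphere. What the computation actually proves is $P_g(T_Hf)=f-T_\Gamma^{*}f$, where $T_\Gamma^{*}f(x):=\int_M\Gamma(y,x)f(y)\,\ud\mu_g(y)$. For your $f=\sum_k T_\Gamma^k(1)$ this gives $P_gu=1+T_\Gamma f-T_\Gamma^{*}f$, which has no sign.

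The repair uses only the identity you already established in the other direction: $(I-T_\Gamma)\varphi=T_H(P_g\varphi)$ for every smooth $\varphi$.
\begin{itemize}
\item If $r_\sigma(T_\Gamma)<1$, then $I-T_\Gamma$ is invertible on $C(M)$, with positive inverse $\sum_{k\ge 0}T_\Gamma^k$.
\item Elements of $\ker P_g$ are smooth, so this forces $\ker P_g=\{0\}$. Hence the self-adjoint elliptic operator $P_g$ is invertible, and $v:=P_g^{-1}(1)\in C^\infty(M)$.
\item Then $(I-T_\Gamma)v=T_H(1)$, so $v=\sum_{k\ge 0}T_\Gamma^k(T_H(1))\ge T_H(1)>0$.
\item Therefore $\tilde g=v^{\frac{4}{n-4}}g$ has $Q_{\tilde g}=\frac{2}{n-4}v^{-\frac{n+4}{n-4}}>0$, and no separate sphere case is needed.
\end{itemize}
Alternatively, keep your Neumann series but run it for $T_\Gamma^{*}$: set $f^{*}:=\sum_k (T_\Gamma^{*})^k(1)\ge 1$, so that $P_g(T_Hf^{*})=1$. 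This requires checking $r_\sigma(T_\Gamma^{*})=r_\sigma(T_\Gamma)$, which holds because both operators are compact with weakly singular kernels, they are $L^2$-adjoints, and eigenfunctions for nonzero eigenvalues are continuous. This version has a bonus: since $f^{*}\ge 1$, Lemma~\ref{lem:positive-R_g} also gives $R_{\tilde g}>0$.
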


In contrast to  Theorem \ref{thm:Hang-Yang theorem}, we further  prove  that,  if $r_\sigma(T_\Gamma)<1$, then there exists  a  conformal metric with both positive  $Q$-curvature and positive scalar curvature.

\begin{lemma}\label{lemm:crucial}
	Let $(M,g)$  be a smooth  compact  Riemannian manifold with dimension $n\geq 5$ and  positive Yamabe invariant $Y(M,[g])$.  If $r_\sigma(T_\Gamma)<1$, then there exists a conformal metric $\tilde g\in [g]$ such that the $Q$-curvatue $Q_{\tilde g}\geq 0$,  $Q_{\tilde g}\not \equiv 0$, and the scalar curvature $R_{\tilde g}>0$.
\end{lemma}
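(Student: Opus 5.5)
Since $r_\sigma(T_\Gamma)<1$, the operator $I-T_\Gamma$ is invertible on $C(M)$, and its inverse is given by the Neumann series $(I-T_\Gamma)^{-1}=\sum_{k\ge 0}T_\Gamma^k$. This series converges in operator norm by Gelfand's formula, exactly as in Proposition \ref{prop:operator-identity}. Since $T_\Gamma$ is a positive operator, every term of the series maps nonnegative functions to nonnegative functions.

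I start from the constant function $1$ and set
\[
f:=(I-T_\Gamma)^{-1}(1)=\sum_{k\ge 0}T_\Gamma^k(1)\ge 1.
\]
Then $f\in C(M)$ and $f\ge \varepsilon=1>0$. Next define $u:=T_H(f)$. By Lemma \ref{lem:positive-R_g}, $u>0$ and $L_g\bigl(u^{\frac{n-2}{n-4}}\bigr)>0$. By \eqref{eq:P-T-H} we have $P_g u=f-T_\Gamma(f)=1$, at least in the distributional sense. If the manifold is conformal to the round sphere, I would handle it separately: there $\Gamma\equiv 0$ and the round metric already works. Proposition \ref{prop:operator-identity} only needs the non-spherical assumption for the properties of $T_\Gamma$, and the identity \eqref{eq:P-T-H} holds regardless.

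The candidate metric is $\tilde g:=u^{\frac{4}{n-4}}g$. By \eqref{Q-def},
\[
Q_{\tilde g}=\frac{2}{n-4}u^{-\frac{n+4}{n-4}}P_g u=\frac{2}{n-4}u^{-\frac{n+4}{n-4}}>0 .
\]
For the scalar curvature, write $\tilde g=\bigl(u^{\frac{n-2}{n-4}}\bigr)^{\frac{4}{n-2}}g$. Then
\[
R_{\tilde g}=w^{-\frac{n+2}{n-2}}L_g w,\qquad w:=u^{\frac{n-2}{n-4}},
\]
and this is strictly positive by the second part of Lemma \ref{lem:positive-R_g}. So $\tilde g$ even has $Q_{\tilde g}>0$, which is stronger than the required $Q_{\tilde g}\ge0$, $Q_{\tilde g}\not\equiv0$, together with $R_{\tilde g}>0$.

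The main obstacle is regularity. We need $u=T_H(f)$ to be smooth, or at least $C^{4}$, so that $\tilde g$ is a genuine smooth metric and the pointwise formulas apply. I would bootstrap as follows. Since $f$ is continuous and $\Gamma$ has a kernel of order $O(r^{2-n})$, the function $T_\Gamma f$ is H\"older continuous; hence $f=1+T_\Gamma f$ is H\"older. Then $P_g u=1$ holds in distributions, and elliptic regularity for the fourth-order operator $P_g$ gives $u\in C^{4,\alpha}$. Iterating (or using $P_gu=1$ directly) yields $u\in C^\infty$. One must also check that the differentiation under the integral in Lemma \ref{lem:positive-R_g} is legitimate for this $f$, which that lemma already grants for continuous $f$. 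Alternatively, if smoothness of $u$ is hard to secure, I would replace $f$ by a smooth positive approximation $f_\delta$ with $\|f_\delta-f\|_\infty$ small. Then $P_g T_H(f_\delta)=f_\delta-T_\Gamma f_\delta$ is close to $1$, and hence positive, so the same argument goes through.
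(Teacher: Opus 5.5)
Your proof is correct, but it takes a different route from the paper. In the non-spherical case the paper uses the Krein--Rutman eigenfunction $f\ge 0$ with $T_\Gamma f=r_\sigma(T_\Gamma)f$. Then $P_g(T_Hf)=(1-r_\sigma(T_\Gamma))f$ is only $\ge 0$ and $\not\equiv 0$. So the paper can use only the weak half of Lemma~\ref{lem:positive-R_g} to get $R_{\tilde g}\ge 0$, and it needs the Gursky--Malchiodi maximum-principle argument of Lemma~\ref{lem:GM} to upgrade this to $R_{\tilde g}>0$. It treats the sphere separately, via Theorem~\ref{thm:Hang-Yang theorem} and the Li--Xu result.

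You instead solve $(I-T_\Gamma)f=1$ by the Neumann series. This needs only boundedness and positivity of $T_\Gamma$ (no compactness, no Krein--Rutman) and yields $f\ge 1$. Then $P_g(T_Hf)=1$ gives $Q_{\tilde g}>0$ outright, and the strict half of Lemma~\ref{lem:positive-R_g} gives $R_{\tilde g}>0$ without Lemma~\ref{lem:GM}. The spherical case is covered by the same computation, since there $\Gamma\equiv 0$ and $f\equiv 1$.

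Your conclusion is stronger than the lemma requires, and it would make the final appeal to Theorem D of Gursky--Malchiodi in the proof of Theorem~\ref{thm:main theorem} unnecessary. The paper's choice has the modest advantage of reusing the eigenfunction setup that drives the contradiction argument for $r_\sigma(T_\Gamma)<1$ in that proof.

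For regularity, the clean justification is the one you mention in passing. The function $u=T_H(f)$ is continuous and satisfies $P_gu=1$ in the distributional sense, so elliptic regularity gives $u\in C^\infty$ directly. Neither the H\"older bootstrap for $f$ nor the smoothing fallback is needed.
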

\begin{proof}
	
	When $(M,g)$ is conformally equivalent  to standard sphere, Theorem \ref{thm:Hang-Yang theorem} shows that there exists a conformal metric $\tilde g\in [g]$  with positive $Q$-curvature $Q_{\tilde g}$. Then, the scalar curvature $R_{\tilde g}>0$ follows from Theorem 1.1 in \cite{Li-Xu} by  applying  stereographic projection.
	
In the remaining case, based on Proposition \ref{prop:operator-identity},  Krein-Rutman theorem (see \cite{Lax})  shows that there exists a  non-zero   function
$f\in C(M)$ such that
\begin{equation*}
	f\geq 0, \quad T_{\Gamma}(f)=r_{\sigma}(T_{\Gamma})f.
\end{equation*}
Since $f$ is an eigenfunction, with the help of bootstrap argument and standard regularity theory, one has $f\in C^\infty(M)$ and then so is  $T_H(f)$. Due to Lemma \ref{lem:positive-R_g}, one has $T_H(f)>0.$ 
Using  Proposition
\ref{prop:operator-identity} and our assumption $r_\sigma(T_\Gamma)<1$,  there holds
\begin{equation}\label{Q_tilde g geq 0}
	P_g(T_H(f))=(1-r_\sigma(T_\Gamma))f\geq 0, \quad P_g(T_H(f))\not \equiv 0.
\end{equation}
Choose the conformal metric $\tilde g:=(T_H(f))^{\frac{4}{n-4}}g$. Using \eqref{Q_tilde g geq 0} and \eqref{Q-def}, one has  $Q_{\tilde g}\geq 0$ and $Q_{\tilde g}\not \equiv 0$.
	   Lemma \ref{lem:positive-R_g} yields that  scalar curvature $R_{ \tilde g}$  satisfies
	$$R_{\tilde g}=T_H(f)^{-\frac{n+2}{n-4}}L_g\left((T_H(f))^{\frac{n-2}{n-4}}\right)\geq 0.$$
	Then, using Lemma \ref{lem:GM}, one has $R_{\tilde g}>0.$
	Thus, we finish the proof.
\end{proof}

\section{Proof and further discussion}\label{sec:proof}

{\bf Proof of Theorem \ref{thm:main theorem}:}

In fact, it suffices to treat the case where  $(M,g)$ is not conformally equivalent to standard sphere, as the remaining case is trivial.
First, we claim  that 
	\begin{equation}\label{r_sigam<1}
		r_{\sigma}(T_{\Gamma})<1.
	\end{equation}
We argue by contradiction and  suppose that $r_{\sigma}(T_{\Gamma})\geq1.$ 
 Follow the same argument and notations as in the proof of Lemma \ref{lemm:crucial}.
Using Proposition \ref{prop:operator-identity} and Krein-Rutman theorem, there exists  a non-zero  function
$f\in C(M)$ such that
\begin{equation}
	f\geq 0,\quad  T_{\Gamma}(f)=r_{\sigma}(T_{\Gamma})f.
\end{equation}
By regularity theory,  $f$ is smooth. Then, $T_H(f)$ is a positive and smooth function.
 Using Proposition
	\ref{prop:operator-identity},  there holds
	\begin{equation}\label{eq:Pu-eigen}
		P_g(T_H(f))=(1-r_\sigma(T_\Gamma))f.
	\end{equation}
	With the help of the above identity \eqref{eq:Pu-eigen} and the assumption $r_\sigma(T_\Gamma)\geq 1$, the Paneitz energy  satisfies
	\begin{equation}\label{eq:energy-nonpositive}
\int_MT_H(f)P_g(T_H(f))\dv
		=(1-r_\sigma(T_\Gamma))\int_MfT_H(f)\dv\leq0.
	\end{equation}
Now, for small positive constant $\varepsilon$,  we consider 
$$u_\epsilon:=T_H(f+\varepsilon).$$
Consider the conformal metric $g_{\varepsilon}=u_\varepsilon^{\frac{4}{n-4}}g$.
Using Lemma \ref{lem:positive-R_g}, the scalar curvature $R_{g_\varepsilon}$ satisfies
\begin{equation}\label{R_epsilon-positive}
	R_{g_\varepsilon}=u_\varepsilon^{-\frac{n+2}{n-4}}L_g(u_\varepsilon^{\frac{n-2}{n-4}})>0.
\end{equation} 
Then, using \eqref{R_epsilon-positive} and  the definition \eqref{eq:Y4-star}, there holds
	\begin{equation}\label{eq:Y4star-lower-bound}
	\int_Mu_\varepsilon P_g u_\varepsilon\dv
		\geq Y_4^*(M,[g])
		\left(\int_Mu_\varepsilon^{\frac{2n}{n-4}}\dv\right)^{(n-4)/n}.
	\end{equation}	
On one hand, letting $\varepsilon\to 0$ and using \eqref{eq:energy-nonpositive}, one has 
	$$
	\int_Mu_\varepsilon P_g u_\varepsilon\dv\to \int_MT_H(f)P_g(T_H(f))\dv\leq0.
$$
On the other hand,  letting $\varepsilon\to 0$, one has 
$$
	Y_4^*(M,[g])
	\left(\int_Mu_\varepsilon^{\frac{2n}{n-4}}\dv\right)^{\frac{n-4}{n}}
	\to 
	Y_4^*(M,[g])
	\left(\int_M(T_H(f))^{\frac{2n}{n-4}}\dv\right)^{\frac{n-4}{n}}>0.
$$
which contradicts \eqref{eq:Y4star-lower-bound}.  Therefore,
the claim \eqref{r_sigam<1} holds.  Finally, combining with Lemma \ref{lemm:crucial} and Theorem D in \cite{GM},  we finish the proof.  \qed

\vspace{3em}

{\bf Proof of Theorem \ref{thm:positive Q imply positive P}:}

First, by Theorem 1.1 in \cite{Hang-Yang CPAM ngeq5}, there exists a conformal metric with positive constant $Q$-curvature. Applying Theorem \ref{thm:Hang-Yang theorem}, we obtain that  $r_{\sigma}(T_\Gamma)<1$.

Lemma \ref{lemm:crucial} ensures the existence of a conformal metric $\tilde g=u^{\frac{4}{n-4}}g$ such that $Q_{\tilde g}\ge 0$, $Q_{\tilde g}\not\equiv 0$ and $R_{\tilde g}>0$. Then,  Lemma \ref{lem:GM}  implies $R_g>0$.
Finally, Lemma \ref{lem:positive-P_g} implies that $P_g$ is also positive, which completes the proof. \qed

\vspace{2em}

{\bf Proof of Corollary \ref{corollary}:}

{\bf Case 1:} $Y_4(M,[g])\Rightarrow \exists \tilde g\in [g], Q_{\tilde{g}}=1.$

Using Theorem \ref{thm:main theorem} and \eqref{Y_4-relation}, there exists a conformal metric with positive $Q$-curvature and positive scalar curvature. Then, applying Theorem D in \cite{GM},  we obtain a conformal metric with $Q$-curvature equal to one.

{\bf Case 2:} $ \exists \tilde g\in [g], Q_{\tilde{g}}=1\Rightarrow Y_4(M,[g])>0$.

This directly follows from Theorem \ref{thm:positive Q imply positive P}.

{\bf Case 3:} $\exists \tilde g\in [g], Q_{\tilde{g}}=1\Rightarrow \ker P_g=\{0\}, G_P>0$.
This directly follows from Theorem 1. 1 in \cite{Hang-Yang 15 IMRN}.

{\bf Case 4:} $\ker P_g=\{0\}, G_P>0 \Rightarrow \exists \tilde g\in [g], Q_{\tilde{g}}=1$.
It follows from Theorem 1.1 in \cite{Hang-Yang 15 IMRN} and Theorem 1.1 in \cite{Hang-Yang CPAM ngeq5}. \qed

\vspace{3em}

Combining Theorem \ref{thm:main theorem} with Theorem \ref{thm:positive Q imply positive P}, we remove the assumption $Q_g\geq 0$ and $Q_g\not \equiv 0$ in Theorem 1.3 of  \cite{Hang-Yang CPAM ngeq5} and then the theorem can be stated as follows.

\begin{theorem}
		Let $(M,g)$ be a smooth  compact  Riemannian manifold of dimension $n\geq 5$.  Suppose that
		\begin{equation}\label{positive-invariant-condition}
			Y(M,g)>0, \quad  Y_4(M,[g])>0.
		\end{equation} Then, there holds
		\begin{enumerate}
			\item $Y_4(M,[g])\leq Y_4(\ms^n,[g_0])$ with equality holding if and only if $(M,g)$ is conformally equivalent to standard sphere.
			\item $Y_4(M,[g])$ is always achieved. Any minimizer must be smooth and cannot
			change sign.
		\end{enumerate}		
\end{theorem}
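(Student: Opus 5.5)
The plan is to reduce the statement to Theorem 1.3 of \cite{Hang-Yang CPAM ngeq5}. That theorem proves exactly conclusions (1) and (2) under the extra hypotheses $Y(M,[g])>0$, $Q_g\ge 0$ and $Q_g\not\equiv 0$. All quantities involved are conformal invariants: $Y_4(M,[g])$, the property that $Y_4$ is attained, and the fact that minimizers are smooth and do not change sign. So it suffices to find a metric $\tilde g\in[g]$ with $Q_{\tilde g}\ge0$ and $Q_{\tilde g}\not\equiv 0$, and then apply the Hang--Yang theorem to $\tilde g$ in place of $g$.

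The first step is to produce this metric from \eqref{positive-invariant-condition}. By \eqref{Y_4-relation}, $Y_4^*(M,[g])\ge Y_4(M,[g])>0$. Together with $Y(M,[g])>0$, Theorem \ref{thm:main theorem} then gives $\tilde g\in[g]$ with $R_{\tilde g}>0$ and $Q_{\tilde g}>0$. Alternatively, Case 1 of Corollary \ref{corollary} gives $Q_{\tilde g}\equiv1$. In either case, $\tilde g$ satisfies the Hang--Yang hypotheses \eqref{condition-of-HY}.

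The second step transports the conclusions back to $g$. The conformal covariance $P_{\hat g}\varphi=w^{-\frac{n+4}{n-4}}P_{\tilde g}(w\varphi)$ for $\hat g=w^{\frac4{n-4}}\tilde g$ shows that the Rayleigh quotient in \eqref{eq:Y_4-def} is invariant under $u\mapsto w^{-1}u$. This map sends minimizers for $g$ to minimizers for $\tilde g$ and back. Multiplication by a smooth positive function preserves both smoothness and sign, so statement (2) for $\tilde g$ is equivalent to statement (2) for $g$. Statement (1) is stated purely in terms of the conformal class, and conformal equivalence to the round sphere is a property of $[g]$, so (1) transfers immediately.

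There is no serious obstacle, since the analytic work is contained in Theorem \ref{thm:main theorem} and in \cite{Hang-Yang CPAM ngeq5}. The one point to check is that Theorem 1.3 of \cite{Hang-Yang CPAM ngeq5} really requires only \eqref{condition-of-HY} for some metric in the class, and not for the given background metric in some essential way. If it does use the background metric, Theorem \ref{thm:positive Q imply positive P} applied to $\tilde g$ also gives $R_{\tilde g}>0$ and $P_{\tilde g}>0$. That covers any additional positivity used in their argument, such as positivity of $G_P$ via Corollary \ref{corollary}.
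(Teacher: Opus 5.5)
Your proposal is correct and follows the paper's route: the paper obtains this theorem by using $Y_4^*(M,[g])\ge Y_4(M,[g])>0$ and Theorem \ref{thm:main theorem} to produce a metric in $[g]$ with $R>0$ and $Q>0$, with Theorem \ref{thm:positive Q imply positive P} supplying positivity of the Paneitz operator. That metric satisfies the hypotheses of Theorem 1.3 of \cite{Hang-Yang CPAM ngeq5}, and that theorem's conclusions depend only on the conformal class. Your explicit check that minimizers, their smoothness and their sign transfer under $u\mapsto w^{-1}u$ fills in the one step the paper leaves implicit.
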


\end{document}